\newtheorem{thm}{Theorem}[section]
\newtheorem{example}{Example}[section]
\newtheorem{definition}{Definition}[section]
\def\P{{\mathbb P}}
\def\ind{\mathds{1}}
\def\Cov{{\rm Cov}} 
\def\text#1{\mbox{\rm #1}}
\newcommand{\norm}[1]{\|{#1} \|}
\newcommand{\supp}{{\rm supp}}
\newcommand{\cF}{\mathcal{F}}
\newcommand{\reals}{\mathbb{R}}
\newcommand{\E}{\mathbb{E}}
\newcommand{\Z}{\mathbb{Z}}
\newcommand{\cM}{\mathcal{M}}
\newcommand{\cL}{\mathcal{L}}
\newcommand{\cA}{\mathcal{A}}
\newcommand{\cB}{\mathcal{B}}
\newcommand{\cX}{\mathcal{X}}
\newcommand{\R}{\reals}
\newcommand{\card}{\rm Card}
\def\Lip{\mathrm{Lip}}
\def\N{{\mathbb N}}
\let\hat\widehat
\let\tilde\widetilde
\begin{document}

\setlength{\abovedisplayskip}{5pt}
\setlength{\belowdisplayskip}{5pt}
\setlength{\abovedisplayshortskip}{5pt}
\setlength{\belowdisplayshortskip}{5pt}

\title{\LARGE Probability inequalities for high dimensional time series under a triangular array framework}

\author[1]{Fang Han}
\author[2]{Wei Biao Wu}
\affil[1]{
University of Washington
 \\
fanghan@uw.edu
}
\affil[2]{
University of Chicago
 \\
wbwu@galton.uchicago.edu
}

\date{}

\maketitle

\begin{abstract}
Study of time series data often involves measuring the strength of temporal dependence, on which statistical properties like consistency and central limit theorem are built. Historically, various dependence measures have been proposed. In this note, we first survey some of the most well-used dependence measures as well as various probability and moment inequalities built upon them under a high-dimensional triangular array time series setting. We then argue that this triangular array setting will pose substantially new challenges to the verification of some dependence conditions. In particular, ``textbook results" could now be misleading, and hence are recommended to be used with caution. 
\end{abstract}

{\bf Keywords:} mixing conditions, weak dependence measures, $\tau$-mixing, functional dependence measure, high dimension, time series analysis.

\section{The measure of dependence}\label{sec:intro}

We first introduce the mixing conditions defined on $\sigma$-fields. Fix the probability space as $(\Omega, \cF, \P)$. For any two $\sigma$-fields $\cA, \cB$ belonging to $\cF$, define the following four measures of dependence between $\cA$ and $\cB$ (cf. Chapter 3, \cite{bradley2007}):
\begin{align*}
\alpha(\cA,\cB):=\sup_{A\in\cA, B\in\cB}|\P(A\cap B)-\P(A)\P(B)|,\\
\beta(\cA,\cB):=\sup \frac{1}{2}\sum_{i=1}^I\sum_{j=1}^J|\P(A_i\cap B_j)-\P(A_i)\P(B_j)|,\\
\phi(\cA,\cB):=\sup_{A\in\cA,B\in\cB, \P(A)>0}|\P(B~|~A)-\P(B)|,\\
\rho(\cA,\cB):=\sup\{|\mathrm{Corr}(f,g)|,~~f\in \cL^2_{\reals}(\cA), g\in \cL^2_{\reals}(\cB)\},
\end{align*}
where the supremum in the definition of $\beta(\cA,\cB)$ is taken over all pairs of partitions $\{A_1,\ldots,A_I\}$ and $\{B_1,\ldots,B_J\}$ such that $A_i\in\cA$ and $B_j\in\cB$ for all $i,j$, and for any $p\in[1,\infty]$, let $\cL^p_{\reals}(\cA)$ represent the family of all real valued, $\cA$-measurable random variables $X$ on $\Omega$ such that $\norm{X}_{L_p}:=(\E |X|^p)^{1/p}<\infty$. We refer to \cite{bradley2005basic} for basic properties and historical developments on these dependence measures.

Now let's consider a (not necessarily stationary) time series $\{X_t\in \R^d\}_{t\in\Z}$ with $\Z$ and $\R^d$ representing the sets of all integers and all $d$-dimensional real vectors. For each ``time gap" $m=1,2,\ldots$, with the above dependence measures, we are now ready to define the following four mixing coefficients that appear frequently in literature:
\begin{align*}
\alpha(\{X_t\}_{t\in\Z};m) := \sup_{j\in\Z}\alpha(\sigma(\{X_t\}_{t\leq j}), \sigma(\{X_t\}_{t\geq j+m})),\\
\beta(\{X_t\}_{t\in\Z};m) := \sup_{j\in\Z}\beta(\sigma(\{X_t\}_{t\leq j}), \sigma(\{X_t\}_{t\geq j+m})),\\
\phi(\{X_t\}_{t\in\Z};m) := \sup_{j\in\Z}\phi(\sigma(\{X_t\}_{t\leq j}), \sigma(\{X_t\}_{t\geq j+m})),\\
\rho(\{X_t\}_{t\in\Z};m) := \sup_{j\in\Z}\rho(\sigma(\{X_t\}_{t\leq j}), \sigma(\{X_t\}_{t\geq j+m})).
\end{align*}
Here for any random variable $X$, $\sigma(X)$ is understood to be the $\sigma$-field generated by $X$. A review of the history of these mixing coefficients can be found in Section 2.1 in \cite{bradley2005basic}. We also refer readers to the books of \cite{doukhan1994mixing}, \cite{bradley2007}, and \cite{rio2017asymptotic}.

The above mixing coefficients are defined on $\sigma$-fields, and are usually difficult to be explicitly calculated in practice (though when the model is fixed, asymptotic bounds on coefficients can be derived for many time series models and have been established in many works). This is part of the reason to define weak dependence measures that are often much easier to calculate. In the following we introduce several of the most well-used ones. 

\cite{bickel1999new} and \cite{doukhan1999new} introduced a notion of weak dependence that facilitates explicit calculation of the independence strength between ``past" and ``future" without resorting to the latent $\sigma$-fields. They could be roughly understood as upper bounding
\[
{\rm Cov}(f({\rm `` past"}), g({\rm ``future"}))
\] 
by the gap between ``past" and ``future" as well as some parameters of the functions $f$ and $g$. In detail, 
for a function $g:(\R^d)^u \rightarrow \R$, let's define 
\begin{align*}
\Lip_\delta g \!:=\! \sup\Bigl\{ \frac{|g(x_1, \ldots, x_u) \!-\! g(y_1, \ldots, y_u)|}{\delta((x_1, \ldots, x_u), (y_1, \ldots, y_u))}: (x_1, \ldots, x_u) \!\neq\! (y_1, \ldots, y_u) \Bigr\}, 
\end{align*}
where $\delta(\cdot)$ represents a certain metric on the real vector space. Denote $\Lambda_\delta := \{g \!:\! (\R^u)^d \rightarrow \R \text{ for some $u$ }: \Lip_\delta g < \infty\}$ and $\Lambda^{(1)}_\delta := \{g \in \Lambda_\delta: \|g\|_{\infty} \leq 1\}$ with $\|g\|_{\infty} := \sup_{x} |g(x)|$. In the following, $\N$ represents the set of all natural numbers.

\begin{definition}[\cite{doukhan1999new,Doukhan2007}]\label{def:weakdependence}
The process $\{X_t\}_{t \in \Z}$ is $(\Lambda_\delta^{(1)},\psi,\zeta)$-weakly dependent if and only if there exists a function $\psi: \R^2_{+} \times \N^2 \rightarrow \R_{+}$ and a sequence $\zeta=\{\zeta(n)\}_{n \geq 0}$ decreasing to 0 as $n$ goes to infinity, such that for any $g_1,g_2 \in \Lambda_\delta^{(1)}$ with $g_1: (\R^d)^u \rightarrow \reals$, $g_2: (\R^d)^v \rightarrow \R$, $u,v \in \N$, and any $u$-tuple $(s_1,\ldots,s_u)$ and any $v$-tuple $(t_1,\ldots,t_v)$ with $s_1\leq \cdots \leq s_u < t_1 \leq \cdots \leq t_v$, the following inequality is satisfied:
\begin{align*}
\Bigl| \Cov\Bigl\{ g_1(x_{s_1}, \ldots, x_{s_u}), g_2(x_{t_1}, \ldots, x_{t_v}) \Bigr\} \Bigr| \leq \psi( \Lip_\delta g_1, \Lip_\delta g_2, u,v ) \zeta(t_1 - s_u).
\end{align*}
\end{definition}
Important examples of $(\Lambda_\delta^{(1)},\psi,\zeta)$-weakly dependent processes include $\theta$-, $\eta$-, $\kappa$-, and $\lambda$-dependences, which are listed in Table \ref{tab:WDeg}. They correspond to different choices of the function $\psi$. Similar to the mixing coefficients, the sequence $\zeta$ describes the degree of dependence over the process. 

\begin{table}[t]
\caption{Important examples of weak dependence.}
\label{tab:WDeg}
\centering
\begin{tabular}{c|l}
  \toprule
  $\theta$-dependence: & $\psi(\Lip_\delta g_1,\Lip_\delta g_2, u, v)=v \Lip_\delta g_2$ \\
  $\eta$-dependence: & $\psi(\Lip_\delta g_1,\Lip_\delta g_2, u, v)=u\Lip_\delta g_1+v \Lip_\delta g_2$ \\
  $\kappa$-dependence: & $\psi(\Lip_\delta g_1,\Lip_\delta g_2, u, v)=uv \Lip_\delta g_1\Lip_\delta g_2$ \\
  $\lambda$-dependence: & $\psi(\Lip_\delta g_1,\Lip_\delta g_2, u, v) = u\Lip_\delta g_1+v \Lip_\delta g_2 + uv \Lip_\delta g_1\Lip_\delta g_2$ \\
  \bottomrule
\end{tabular}
\end{table}

Later, in \cite{dedecker2004coupling} and \cite{dedecker2005new}, the authors introduced a new set of dependence measures that, instead of putting focus on the covariance structure, highlights the intrinsic ``coupling" property of the sequence. In this paper we will be focused on one important member in this family, the $\tau$-dependence. Consider a general probability space $(\Omega,\cF,\P)$ and a random variable $X$ taking value in a Polish space $(\mathcal{X}, \norm{\cdot}_{\cX})$ endowed with a norm $\norm{\cdot}_{\cX}$ and satisfying $\norm{\norm{X-x_0}_\cX}_{L_1}<\infty$ for some $x_0\in\cX$. Consider a $\sigma$-field $\cA\subset \cF$. The $\tau$-measure of dependence between $X$ and $\cA$ is defined to be
\begin{align*}
\tau(\mathcal{A}, X;\norm{\cdot}_{\cX}) = \Big\lVert\sup_{g \in \Lambda(\norm{\cdot}_{\cX})}\Big\{\int g(x)\P_{X|\mathcal{A}}({\sf d}x) - \int g(x)\P_X({\sf d}x) \Big\}\Big\lVert_{L_1},
\end{align*}
where $\P_X$ and $\P_{X|\mathcal{A}}$ represent the distribution and the conditional distributions of $X$ and $X$ given $\mathcal{A}$, $\Lambda(\norm{\cdot}_{\cX})$ stands for the set of 1-Lipschitz functions from $\mathcal{X}$ to $\R$ with respect to the norm $\norm{\cdot}_{\cX}$. 

The following theorem, extracted from \cite{dedecker2004coupling} and \cite{dedecker2007book}, characterizes the intrinsic ``coupling property" of $\tau$-measure of dependence and, as a matter of fact, gives an alternative definition of $\tau$-measure that is usually easier to use.

\begin{thm}[Lemma 3 in \cite{dedecker2004coupling}, Lemma 5.3 in \cite{dedecker2007book}]\label{thm:dedecker}
Let $(\Omega, \mathcal{\mathcal{F}}, \P)$ be a probability space, $\mathcal{A}$ be a $\sigma$-field of $\mathcal{F}$, and $X$ be a random variable with values in a Polish space $(\mathcal{X}, \norm{\cdot}_{\cX})$. If $Y$ is a random variable distributed as $X$ and independent of $\mathcal{A}$, then
\[
\tau(\mathcal{A}, X;\norm{\cdot}_{\cX}) \leq \E \norm{X-Y}_{\cX}.
\]
Assume that $\int \norm{x-x_0}_{\cX}\P_X({\sf d}x)$ is finite for any $x_0 \in \mathcal{X}$. Assume that there exists a random variable $U$ uniformly distributed over $[0,1]$, independent of the sigma-field generated by $X$ and $\mathcal{A}$. Then there exists a random variable $\tilde{X}$, measurable with respect to $\mathcal{A}\vee \sigma(X) \vee \sigma(U)$, independent of $\mathcal{A}$ and distributed as $X$, such that 
	\begin{align*}
	\tau(\mathcal{A}, X;\norm{\cdot}_{\cX}) = \E \norm{X-\tilde{X}}_{\cX}.
	\end{align*}
\end{thm}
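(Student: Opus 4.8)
The plan is to read the inner quantity in the definition of $\tau$ as a Wasserstein-type distance and then treat the two assertions separately: the inequality by a direct one-line Lipschitz bound, and the equality by constructing an explicit conditionally-optimal coupling, with $U$ supplying the randomization. I would first record that, by the integrability hypothesis $\int\norm{x-x_0}_{\cX}\,\P_X({\sf d}x)<\infty$, all quantities below are finite.

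For the inequality, I would fix a $1$-Lipschitz function $g \in \Lambda(\norm{\cdot}_{\cX})$. Since $Y$ is distributed as $X$ and independent of $\cA$, one has $\E[g(Y)\mid\cA]=\E g(Y)=\E g(X)$ almost surely, so
\[
\int g\,{\sf d}\P_{X|\cA} - \int g\,{\sf d}\P_X = \E\big[g(X)-g(Y)\mid\cA\big] \quad\text{a.s.}
\]
The $1$-Lipschitz property gives $|g(X)-g(Y)|\le\norm{X-Y}_{\cX}$, so the right-hand side is dominated by $\E[\norm{X-Y}_{\cX}\mid\cA]$. Using separability of $\cX$ to replace the supremum over $\Lambda(\norm{\cdot}_{\cX})$ by one over a countable dense subfamily (so the supremum is measurable and the pointwise bound is preserved), the supremum is $\le\E[\norm{X-Y}_{\cX}\mid\cA]$ almost surely. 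Taking $L_1$ norms and invoking the tower property then yields $\tau(\cA,X;\norm{\cdot}_{\cX})\le\E\norm{X-Y}_{\cX}$.

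For the equality, the key observation is that $\Lambda(\norm{\cdot}_{\cX})$ is symmetric under $g\mapsto-g$, so the inner supremum is exactly the Kantorovich--Rubinstein dual, giving
\[
\sup_{g\in\Lambda(\norm{\cdot}_{\cX})}\Big\{\int g\,{\sf d}\P_{X|\cA}-\int g\,{\sf d}\P_X\Big\} = W_1\big(\P_{X|\cA},\P_X\big)
\]
almost surely, the $1$-Wasserstein distance between the regular conditional law of $X$ given $\cA$ and its marginal law; hence $\tau(\cA,X;\norm{\cdot}_{\cX})=\E\,W_1(\P_{X|\cA},\P_X)$. It then suffices to build $\tilde X$ so that the conditional law of $(X,\tilde X)$ given $\cA$ is, for $\P$-almost every $\omega$, an optimal $W_1$-coupling $\pi_\omega$ of $\P_{X|\cA}(\omega,\cdot)$ and $\P_X$. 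On the Polish space $\cX$ such optimal couplings exist by Kantorovich's theorem. Because the second marginal of $\pi_\omega$ is $\P_X$ for every $\omega$, such a $\tilde X$ is automatically distributed as $X$ and independent of $\cA$, while $\E[\norm{X-\tilde X}_{\cX}\mid\cA](\omega)=\int\norm{x-y}_{\cX}\,\pi_\omega({\sf d}x,{\sf d}y)=W_1(\P_{X|\cA}(\omega,\cdot),\P_X)$; taking expectations recovers the claimed equality.

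The main obstacle is the measurable construction of this conditional coupling. I would first fix a regular version of $\omega\mapsto\P_{X|\cA}(\omega,\cdot)$, which is $\cA$-measurable into the Polish space of probability measures on $\cX$ under weak convergence. Next I would argue that the set-valued map sending $\omega$ to the nonempty, closed collection of optimal $W_1$-couplings of $\P_{X|\cA}(\omega,\cdot)$ with $\P_X$ is measurable, and apply the Kuratowski--Ryll-Nardzewski selection theorem to extract a measurable choice $\omega\mapsto\pi_\omega$. Disintegrating $\pi_\omega({\sf d}x,{\sf d}y)=\P_{X|\cA}(\omega,{\sf d}x)\,K_\omega(x,{\sf d}y)$ and invoking the functional representation (transfer) lemma, the kernel $K_\omega$ can be realized through the uniform variable $U$ as $\tilde X=h\big(\P_{X|\cA}(\omega,\cdot),X,U\big)$; since $U$ is independent of $\sigma(X)\vee\cA$ and $\P_{X|\cA}$ is $\cA$-measurable, this makes $\tilde X$ measurable with respect to $\cA\vee\sigma(X)\vee\sigma(U)$ and endows $(X,\tilde X)$ with the prescribed conditional law $\pi_\omega$. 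Checking that this single construction simultaneously preserves the $X$-marginal, forces independence from $\cA$, and attains the optimal transport cost pointwise is the delicate step; the remaining verifications are routine.
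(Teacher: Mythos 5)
First, a point of order: the paper itself states Theorem \ref{thm:dedecker} \emph{without proof} --- it is quoted from Lemma 3 of \cite{dedecker2004coupling} and Lemma 5.3 of \cite{dedecker2007book} --- so there is no in-paper argument to compare yours against, and I assess your proposal on its own merits relative to the cited literature.

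On those terms, your outline is correct. The inequality half is exactly the standard argument: for fixed $g\in\Lambda(\norm{\cdot}_{\cX})$ one has $\int g\,{\sf d}\P_{X|\cA}-\int g\,{\sf d}\P_X=\E[g(X)-g(Y)\mid\cA]\le\E[\norm{X-Y}_{\cX}\mid\cA]$ almost surely, and passing to a countable dense subfamily (legitimate since $\cX$ is Polish) handles measurability of the supremum before taking expectations. The equality half is a genuinely more abstract route than the constructive coupling arguments in the weak-dependence literature: you identify the inner supremum with $W_1(\P_{X|\cA},\P_X)$ via Kantorovich--Rubinstein duality, so that $\tau(\cA,X;\norm{\cdot}_{\cX})=\E\, W_1(\P_{X|\cA},\P_X)$, and then realize $\tilde X$ so that the conditional law of $(X,\tilde X)$ given $\cA$ is an optimal plan $\pi_\omega$. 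Each ingredient you invoke is a true theorem that applies here: existence of optimal plans for the continuous cost $\norm{x-y}_{\cX}$ on a Polish space; measurable selection of optimal plans (the set of optimal couplings of $\P_{X|\cA}(\omega,\cdot)$ and $\P_X$ is a nonempty compact subset of the Polish space of probability measures on $\cX\times\cX$, and the correspondence is measurable, so Kuratowski--Ryll-Nardzewski applies); jointly measurable disintegration; and the functional-representation (transfer) lemma, which is what uses the auxiliary uniform $U$ and delivers measurability of $\tilde X$ with respect to $\cA\vee\sigma(X)\vee\sigma(U)$. Since the second marginal of $\pi_\omega$ is $\P_X$ for almost every $\omega$, independence from $\cA$ and equality of laws follow, and conditioning gives $\E\norm{X-\tilde X}_{\cX}=\E\,W_1(\P_{X|\cA},\P_X)$. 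Two details should be stated explicitly in a full write-up: the conditional law $\P_{X|\cA}(\omega,\cdot)$ has finite first moment only for \emph{almost every} $\omega$ (this is where the hypothesis $\int\norm{x-x_0}_{\cX}\P_X({\sf d}x)<\infty$ enters via the tower property), so the selection must be defined arbitrarily on the exceptional null set; and the freezing computation of $\E[F(X,\tilde X)\mid\cA]$ requires $U$ to be independent of $\cA\vee\sigma(X)$ jointly, which is precisely the stated hypothesis on $U$. What your route buys is conceptual clarity --- the coupling is exhibited as a conditional optimal transport plan --- at the price of three nontrivial pieces of machinery, each of which you have correctly identified and correctly flagged as the delicate part.
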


We now apply the notion of $\tau$-dependence to a time series model. Let $\{X_j\}_{j \in J}$ be a set of $\mathcal{X}$-valued random variables with index set $J$ of finite cardinality. Then define
\begin{align*}
\tau(\mathcal{A}, \{X_j \in \mathcal{X}\}_{j \in J}; \norm{\cdot}_{\cX}) = \Big\lVert\sup_{g \in \Lambda(\norm{\cdot}_{\cX}')}\Big\{\int g(x)\P_{\{X_j\}_{j \in J}|\mathcal{A}}({\sf d}x) - \int g(x)\P_{\{X_j \}_{j \in J}}({\sf d}x) \Big\}\Big\lVert_{L_1},
\end{align*}
where $\P_{\{X_j \}_{j \in J}}$ and $\P_{\{X_j\}_{j \in J}|\mathcal{A}}$ represent the distribution of $\{X_j \}_{j \in J}$ and the conditional distribution of $\{X_j \}_{j \in J}$ given $\mathcal{A}$ respectively, and  $\Lambda(\norm{\cdot}_{\cX}')$ stands for the set of 1-Lipschitz functions:
\[
\Lambda(\norm{\cdot}_{\cX}'):= \Big\{f:\underbrace{\mathcal{X}\times \cdots \times \mathcal{X}}_{\card(J)}\to \R; f\text{ is 1-Lipschitz with respect to } \norm{\cdot}_{\cX}'\Big\}
\]
with $\norm{x}_{\cX}' := \sum_{j \in J}\norm{x_j}_{\cX}$ for any $x=(x_1,\ldots,x_J)\in\cX^{{\card}(J)}$. 

Using these concepts, for a time series $\{X_t\}_{t\in \Z}$, it is ready to define measure of temporal correlation strength as 
\begin{align*}
&\tau(\{X_t\}_{t \in \Z}; m, \norm{\cdot}_{\cX}) :=\\
&\quad\quad \sup_{i > 0}\max_{1 \leq \ell \leq i} \ell^{-1}\sup\Big\{\tau\{\sigma(X_{-\infty}^a), \{X_{j_1}, \dots, X_{j_\ell}\}; \norm{\cdot}_{\cX}\}, a+m \leq j_1 < \dots < j_\ell\Big\},
\end{align*}
where the inner supremum is taken over all $a \in \Z$ and all $\ell$-tuples $(j_1, \dots, j_\ell)$. 

In the end, let's consider $\{X_t\}_{t\in \Z}$ to be a real stationary causal process of the form
\begin{align}\label{eq:wu}
X_i=g(\cdots,\epsilon_{i-1},\epsilon_i),
\end{align}
with $\{\epsilon_i\}_{i\in\Z}$ an independent and identically distributed (i.i.d.) sequence and $g(\cdot)$ a measurable function such that the above time series model is properly defined. In \cite{wu2005nonlinear}, the author introduced the functional dependence measure, as manifested below.

\begin{definition}[Functional dependence measure, \cite{wu2005nonlinear}] \label{def:wu} Let $\{\epsilon_i,\epsilon_j'\}_{i,j\in\Z}$ be i.i.d. random variables. Let $X_m' := g(\cdots,\epsilon_{-2},\epsilon_{-1},\epsilon_0',\epsilon_1,\ldots,\epsilon_m)$. The functional dependence measure with regard to the $L_p$ norm is defined to be
\[
\theta_{m,p}:=\norm{X_m-X_m'}_{L_p}
\]
with the tail sum $\Theta_{m,p}:=\sum_{k=m}^{\infty}\theta_{k,p}$.
\end{definition}

The functional dependence measure $\theta_{m,p}$ is flexible and easy to compute in many applications; we refer the readers of interest to \cite{wu2011asymptotic} for a systematic review. In addition, given the data generating mechanism $g$, one can numerically compute functional dependence measures by Monte Carlo simulations. In contrast, numeric computation of other dependence measures can be highly nontrivial due to their definitions. 

We also mention a connection between physical dependence and $\tau$-dependence. As is apparent by comparing Theorem \ref{thm:dedecker} with Definition \ref{def:wu}, $\tau$-dependence and functional dependence measure are interestingly intrinsically connected. In particular, they are both adaptable to a notion of coupling. However, as noted in \citet[Remark 3.1]{dedecker2007book}, coupling in functional dependence is given in \cite{dedecker2004coupling} with all elements in the past, while in \cite{wu2005nonlinear} with only element in the past.

\section{Probability and moment inequalities under dependence} \label{sec:inequ}

Probability and moment inequalities play an important role in studying the statistical properties of estimators of parameters in statistical models. They are key in high-dimensional statistical theory, which is by its nature nonasymptotic. Of particular importance are those that give rise to efficient control of tail deviations, namely, higher-order moment and exponential-type inequalities. In this section we will give a brief review of some developed inequalities for time series, which are promising to be applied to the analysis of high-dimensional time series data. For this, this note is restricted to those built on the weak dependence measures introduced in Section \ref{sec:intro}, while those built on other structures like Markov chains or martingales, though related, shall not be covered. 

Before diving into the details, let's first fix what we mean a high-dimensional time series model. To characterize the impact of dimensionality on the performance of an estimator, it has become well-accepted in literature to model high-dimensional data under a triangular-array setting (see, for example, Section 1 in \cite{greenshtein2004persistence}). Applied to time series models, the following model will be used throughout the rest of this paper:
For each $n\in\N$, let $\{ X_{t,n}\}_{t\in\Z} $ denote a $d_n$-dimensional real time series with $d_n\in \N$ as well as the time series itself depending on $n$. For each  $n\in \N$, a length of $n$  fragment $\{X_{i,n}\}_{i\in[n]}$, with $[n]:=\{1,2,\ldots,n\}$, is observed from the time series $\{ X_{t,n}\}_{t\in\Z} $.  For different $n$, a different  time series with possibly different dimension is observed. As $n$ goes to infinity, the dimension of the $n$ fragment time series, $d_n$, is allowed to increase to infinity as well.  

To name one particular example, let's consider the observations $\{ X_{t,n}\}_{t\in[n]} $ to be generated from a VAR(1) model, $\cM_n$, that is changing with $n$:
\[
\cM_n: \Big\{\{X_{t,n}\}_{t\in\Z}: X_{t,n}=A_nX_{t-1,n}+E_{t,n}, ~{\rm for~all~}~t\in\Z\Big\}.
\]
Here  $A_n$  is  a  $d_n \times d_n$-dimensional transition matrix, $E_{t,n}$ is  a $d_n$-dimensional vector of error term.  The value $A_{n}$ and the dimension $d_n$ are both allowed to change  with $n$; e.g., it could be true that
\begin{itemize}
\item[] As $n=1$, a  $1$-dimensional, length of $1$ fragment, $\{X_{1,1}\}$,  is observed from the model $\cM_1$ with $A_1=0.5$;
\item[] As $n=2$, a $2$-dimensional, length of $2$ fragment time series, $\{X_{1,2},X_{2,2}\}$, is observed from the model $\cM_2$ with $$A_2=\begin{pmatrix} 
0.5 & 0.1 \\
0.2 & 0.25
\end{pmatrix};$$
\item[] As $n=3$, a $4$-dimensional, length of $3$ fragment time series, $\{X_{1,3},X_{2,2}, X_{3,3}\}$, is observed from the model $\cM_3$ with $$A_3=\begin{pmatrix} 
0.2 & 0 & 0.1 & 0.4 \\
0.2 & 0.1 & 0.1 & 0.2 \\
0.1 & 0.2 & 0.3 & 0.1 \\
0 & 0 & 0 & 0.1
\end{pmatrix};$$
\item[] $\cdots\cdots$.
\end{itemize}

\subsection{Sample sum for scalars with $d_n=1$}

Several of the most essential moment inequalities are surrounding the sample sum. In detail, for any $n\in\N$, consider a time series $\{X_{t,n}\}_{t\in\Z}$ and its size-$n$ fragment $\{X_{i,n}\}_{i\in[n]}$. Our aim is to characterize the moment and tail properties for $\sum_{i=1}^n \Big(X_{i,n}-\E X_{i,n}\Big)$. Without loss of generality, in the following it is assumed that the time series has margin mean-zero. In this section we are focused on the sample sum $S_n:=\sum_{i=1}^nX_{i,n}$ of fixed dimension $d_n=1$; in the later sections we shall allow $d_n$ to increase to infinity. 

To start with, let's first consider the case of linear processes by assuming that $\{X_{t,n}\}_{t\in\Z}$ follows a linear process
\begin{align}\label{eq:linear}
X_{t,n}=\sum_{j=0}^{\infty}f_{j,n}\epsilon_{i-j,n},
\end{align}
with $\{\epsilon_{j,n}\}_{j\in\Z}$ understood to be an i.i.d. scalar sequence with mean zero and $\norm{\epsilon_{0,n}}_{L_p}<\infty$ for some $p>2$, and $f_n:=\{f_{j,n}\}$ as a real coefficient sequence satisfying $\norm{f_n}_2^2:=\sum_{j=0}^{\infty}f_{j,n}^2<\infty$. The form \eqref{eq:linear} is very general and includes many famous time series models such as the ARMA processes. 

The first result concerns such time series of the particular form \eqref{eq:linear}, and is from \cite{wu2016performance}. It gives a Nagaev-type inequality for linear processes, including both short- and long-range dependence cases. 

\begin{thm}[Theorem 1, \cite{wu2016performance}] Assume the linear process in \eqref{eq:linear}. 
\begin{itemize}
\item[(i)] (Short-range dependence) Let $c_p:=2e^{-p}(p+2)^{-2}$. If $\norm{f_{n}}_1:=\sum_{j=0}^{\infty}|f_{j,n}|<\infty$, then for any $x>0$ we have
\[
\P(|S_n|\geq x) \leq \Big(1+\frac{2}{p}\Big)^p\cdot\frac{n\norm{f_n}_1^p\norm{\epsilon_{0,n}}_{L_p}^p}{x^p}+2\exp\Big(-\frac{c_px^2}{n\norm{f_n}_1^2\norm{\epsilon_{0,n}}_{L_2}^2}  \Big).
\]
\item[(ii)] (Long-range dependence) Assume $K_n:=\sup_{j\geq 0}|f_{j,n}|(1+j)^\beta<\infty$ for some $1/2<\beta<1$. Then there exist  constants $C_1,C_2$ only depending on $p$ and $\beta$ such that, for all $x>0$,
\[
\P(|S_n|\geq x)\leq C_1\frac{n^{1+p(1-\beta)}K_n^p\norm{\epsilon_{0,n}}_{L_p}^p}{x^p}+2\exp\Big(-\frac{C_2x^2}{n^{3-2\beta}\norm{\epsilon_{0,n}}_{L_2}^2K_n^2}  \Big).
\]
\end{itemize}
\end{thm}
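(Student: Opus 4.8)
\section*{Proof proposal}

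The plan is to represent the sample sum as a weighted sum of the i.i.d. innovations and then invoke the classical Nagaev inequality for independent summands, whose constants $(1+2/p)^p$ and $c_p = 2e^{-p}(p+2)^{-2}$ are exactly those appearing in the statement. Setting $f_{j,n} = 0$ for $j < 0$ and interchanging the order of summation in $S_n = \sum_{i=1}^n \sum_{j=0}^\infty f_{j,n}\epsilon_{i-j,n}$, I would collect the coefficient attached to each innovation $\epsilon_{k,n}$, obtaining $S_n = \sum_{k \le n} a_{k,n}\epsilon_{k,n}$ with $a_{k,n} := \sum_{i=1}^n f_{i-k,n}$. The summands $a_{k,n}\epsilon_{k,n}$ are independent and mean-zero, with $\E|a_{k,n}\epsilon_{k,n}|^p = |a_{k,n}|^p \|\epsilon_{0,n}\|_{L_p}^p$ and $\E(a_{k,n}\epsilon_{k,n})^2 = a_{k,n}^2\|\epsilon_{0,n}\|_{L_2}^2$. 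Since the index set is infinite, I would first apply Nagaev to the truncated sums $\sum_{-N \le k \le n}$, whose variance and $p$-th-moment sums are dominated by the full sums, and then pass to the limit $N \to \infty$ by a routine truncation argument (the truncations converge to $S_n$ in $L_2$, hence in probability). This reduces both parts of the theorem to estimating $\sum_k a_{k,n}^2$ and $\sum_k |a_{k,n}|^p$.

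For the short-range part~(i), the two required estimates are elementary. Interchanging sums gives $\sum_k |a_{k,n}| \le \sum_{j \ge 0}|f_{j,n}| \sum_k \mathbf{1}\{1 \le k+j \le n\} = n\|f_n\|_1$, while for each fixed $k$ one has $|a_{k,n}| \le \sum_{j \ge 0}|f_{j,n}| = \|f_n\|_1$, so $\sup_k |a_{k,n}| \le \|f_n\|_1$. Combining these via $\sum_k a_{k,n}^2 \le (\sup_k|a_{k,n}|)\sum_k|a_{k,n}|$ and $\sum_k|a_{k,n}|^p \le (\sup_k|a_{k,n}|)^{p-1}\sum_k|a_{k,n}|$ yields $\sum_k a_{k,n}^2 \le n\|f_n\|_1^2$ and $\sum_k|a_{k,n}|^p \le n\|f_n\|_1^p$. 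Substituting into the Nagaev bound reproduces~(i) verbatim.

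For the long-range part~(ii), the decay $|f_{j,n}| \le K_n(1+j)^{-\beta}$ is no longer summable, so I would estimate $a_{k,n}$ directly by integral comparison and split the index $k$ according to the age of the innovation. For ``recent'' innovations one gets $|a_{k,n}| \lesssim_\beta K_n\,(n-k+1)^{1-\beta}$ on $1 \le k \le n$ and $|a_{k,n}| \lesssim_\beta K_n\,n^{1-\beta}$ on $-n \le k \le 0$; for ``distant'' innovations ($k = -m$ with $m > n$), a mean-value estimate on $(1+n+m)^{1-\beta}-(1+m)^{1-\beta}$ gives $|a_{k,n}| \lesssim_\beta K_n\,n\,(1+m)^{-\beta}$. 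Summing squares, the recent part contributes $K_n^2 \sum_{\ell=1}^n \ell^{2(1-\beta)} \asymp K_n^2 n^{3-2\beta}$ and the distant part contributes $K_n^2 n^2 \sum_{m>n}(1+m)^{-2\beta} \asymp K_n^2 n^2 \cdot n^{1-2\beta} = K_n^2 n^{3-2\beta}$, the latter finite and of the right order precisely because $2\beta > 1$; hence $\sum_k a_{k,n}^2 \lesssim_\beta K_n^2 n^{3-2\beta}$. An identical split for the $p$-th powers gives a recent contribution $\asymp K_n^p n^{1+p(1-\beta)}$ and a distant contribution $K_n^p n^p \sum_{m>n}(1+m)^{-p\beta} \asymp K_n^p n^{1+p(1-\beta)}$, where convergence of $\sum_m m^{-p\beta}$ uses $p\beta > 1$, valid since $p > 2$ and $\beta > 1/2$. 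Thus $\sum_k|a_{k,n}|^p \lesssim_{p,\beta} K_n^p n^{1+p(1-\beta)}$, and substitution into Nagaev yields~(ii) with constants $C_1, C_2$ depending only on $p$ and $\beta$.

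The reduction to an independent sum and the invocation of Nagaev's inequality are routine; I expect the main obstacle to be the bookkeeping in part~(ii). The delicate point is that a naive global bound on $\sum_k|a_{k,n}|$ diverges, so the contribution of the distant past must be resolved carefully: the split at $|k| \asymp n$ together with the two convergence conditions $2\beta > 1$ and $p\beta > 1$ is exactly what produces the stated exponents $3-2\beta$ and $1+p(1-\beta)$, and getting these powers right (rather than the looser power the distant range naively suggests) is where the care lies.
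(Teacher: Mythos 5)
Your proposal is correct: the reduction of $S_n$ to a weighted sum $\sum_{k\le n} a_{k,n}\epsilon_{k,n}$ of independent innovations, the application of Nagaev's inequality (whose constants $(1+2/p)^p$ and $c_p=2e^{-p}(p+2)^{-2}$ are exactly those quoted), and the two coefficient estimates $\sum_k a_{k,n}^2,\ \sum_k|a_{k,n}|^p$ with the split at $|k|\asymp n$ in the long-range case are precisely the argument of the cited source. Note that this paper itself states the result as Theorem 1 of \cite{wu2016performance} without reproducing a proof, so there is nothing further to compare against; your reconstruction matches the original proof's approach and yields the stated exponents $n^{3-2\beta}$ and $n^{1+p(1-\beta)}$ correctly.
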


We then move on to the general possibly nonlinear case. The first of such results considers the $\phi$-mixing case and is from \cite{dedecker2005new}.

\begin{thm}[Proposition 5, \cite{dedecker2005new}] Let $\{X_{t,n}\}_{t\in\Z}$ be a mean-zero stationary sequence of dimension $d_n$ fixed to be 1. Let $\phi_n(m):=\phi(\{X_{t,n}\}_{t\in\Z};m)$ and $|X_{0,n}|\leq C_n$ for some constant $C_n$ that possibly depends on $n$. Then, for every $p=2,3,\ldots$ and any $n\geq 1$, the following inequality holds:
\[
\E|S_n|^p \leq \Big(8C_n^2p\sum_{i=0}^{n-1}(n-i)\phi_n(i)\Big)^{p/2}.
\]
\end{thm}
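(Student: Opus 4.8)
Throughout write $S_n=\sum_{i=1}^n X_{i,n}$, $\phi_n(m)=\phi(\{X_{t,n}\}_{t\in\Z};m)$, $\mathcal{G}_i:=\sigma(X_{t,n}:t\le i)$, and abbreviate $\Sigma_n:=\sum_{i=0}^{n-1}(n-i)\phi_n(i)$, so the target reads $\E|S_n|^p\le(8C_n^2p\,\Sigma_n)^{p/2}$. The engine of the whole proof is the forward conditional-expectation bound supplied by $\phi$-mixing. From the definition of $\phi(\cA,\cB)$ and its standard equivalent formulation in terms of conditional expectations one has $\|\E(Z\mid\cA)-\E Z\|_\infty\le 2\phi(\cA,\cB)\|Z\|_\infty$ for bounded $\cB$-measurable $Z$. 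Taking $\cA=\mathcal{G}_i$ and $Z=X_{j,n}$, which is measurable with respect to $\sigma(X_{t,n}:t\ge j)$ and hence sits at gap $j-i$ from $\mathcal{G}_i$, and using $\E X_{j,n}=0$ with $\|X_{j,n}\|_\infty\le C_n$, I get for $j>i$
\[
\big\|\E(X_{j,n}\mid\mathcal{G}_i)\big\|_\infty\le 2C_n\,\phi_n(j-i),\qquad\text{hence}\qquad \Big\|\E\big(S_n-S_i\mid\mathcal{G}_i\big)\Big\|_\infty\le 2C_n\sum_{k=1}^{n-i}\phi_n(k).
\]
This is the only place the mixing hypothesis enters; everything after is bookkeeping.

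The plan is to reduce the statement to the one-step moment recursion
\[
\E|S_n|^p\le(p-1)\,W_n\,\E|S_n|^{p-2},\qquad W_n:=8C_n^2\Sigma_n,\qquad p\ge3,
\]
together with the base estimates $\E S_n^2\le W_n$ and $\E|S_n|\le W_n^{1/2}$. The second-moment bound is immediate from the $\phi$-mixing covariance inequality $|\Cov(X_{i,n},X_{j,n})|\le 2C_n^2\phi_n(|i-j|)$ (itself a consequence of the first display), which on summing over the $n^2$ pairs gives $\E S_n^2\le 4C_n^2\Sigma_n\le W_n$, and the first-moment bound then follows by Jensen. Granting the recursion, iterating it down to the base cases yields $\E|S_n|^p\le(p-1)!!\,W_n^{p/2}$, and since $(p-1)!!\le p^{p/2}$ for every integer $p\ge2$ this is exactly $\le(p\,W_n)^{p/2}=(8C_n^2p\,\Sigma_n)^{p/2}$. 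Thus the entire theorem rests on the one-step recursion.

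To derive the recursion I would start from the identity $\E|S_n|^p=\E\big(|S_n|^{p-2}S_n^2\big)$ and expand $S_n^2=\sum_i X_{i,n}^2+2\sum_i X_{i,n}(S_n-S_i)$. The diagonal contributes at most $nC_n^2\,\E|S_n|^{p-2}$, which is absorbed into $W_n\,\E|S_n|^{p-2}$ because $\phi_n(0)=1$ in the nondegenerate case forces $\Sigma_n\ge n$; this is precisely the role of the $i=0$ term in $\Sigma_n$. For the cross term I would decouple the weight via $|S_n|^{p-2}=|S_i|^{p-2}+\big(|S_n|^{p-2}-|S_i|^{p-2}\big)$. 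The $|S_i|^{p-2}$ piece is $\mathcal{G}_i$-measurable, so conditioning on $\mathcal{G}_i$ and inserting the forward bound above turns it into at most $4C_n^2\big(\sum_{k\ge1}\phi_n(k)\big)\E|S_i|^{p-2}$ at each index, which sums to $\lesssim C_n^2\Sigma_n\,\E|S_n|^{p-2}$.

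The hard part will be the remainder $\sum_i\E\big((|S_n|^{p-2}-|S_i|^{p-2})X_{i,n}(S_n-S_i)\big)$, where the weight still depends on the future and so cannot be pulled through the conditional expectation directly. Here one must use the smoothness estimate $\big||S_n|^{p-2}-|S_i|^{p-2}\big|\le(p-2)\max(|S_n|,|S_i|)^{p-3}|S_n-S_i|$, Hölder's inequality, a maximal inequality to replace $\max_{j\le n}\E|S_j|^{p-2}$ by $\E|S_n|^{p-2}$, and once more the conditional-expectation control of $S_n-S_i$. It is this remainder that both generates the factor $(p-1)$ and pushes the constant from $4$ up to $8$, so verifying that it closes the recursion — rather than spawning uncontrolled higher-order terms — is the central technical difficulty; the cleanest way to organize it is an induction on $p$ in which the estimates for exponent $p-2$, and for all the partial sums $S_j$, are fed into the estimate for exponent $p$. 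The boundedness hypothesis $|X_{0,n}|\le C_n$ is used exactly in controlling this remainder and the diagonal.
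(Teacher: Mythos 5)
Your reduction of the theorem to the one-step recursion $\E|S_n|^p\le(p-1)W_n\,\E|S_n|^{p-2}$ is sound arithmetic (granting the recursion, iterating it and using $(p-1)!!\le p^{p/2}$ does give the stated bound), and your two base ingredients --- the Serfling/Ibragimov bound $\|\E(X_{j,n}\mid\mathcal{G}_i)\|_\infty\le 2C_n\phi_n(j-i)$ and the covariance bound for $\E S_n^2$ --- are correct. But the recursion itself, which you yourself flag as ``the central technical difficulty,'' \emph{is} the entire theorem, and the plan you sketch for proving it would not close. (For context: the paper contains no proof of this statement; it is quoted from Proposition 5 of \cite{dedecker2005new}, whose proof rests on Proposition 4 of Dedecker and Doukhan (2003), namely $\E|S_n|^p\le\bigl(2p\sum_{i=1}^n b_{i,n}\bigr)^{p/2}$ with $b_{i,n}=\max_{i\le l\le n}\|X_{i,n}\E(S_l-S_{i-1}\mid\mathcal{F}_i)\|_{L_{p/2}}$; once that inequality is available, the $\phi$-mixing bound on conditional expectations finishes the job in a few lines.)

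Two concrete failures in your sketch. First, there is no maximal inequality that replaces $\max_{j\le n}\E|S_j|^{p-2}$ by $\E|S_n|^{p-2}$: moments of partial sums of stationary mixing sequences are not monotone in $n$ (a Gaussian AR(1) with coefficient $-0.9$ has $\E S_2^2<\E S_1^2$), and genuine maximal inequalities bound $\E\max_j|S_j|^p$ \emph{from above} by essentially the quantity you are trying to prove. The standard repair is to run the argument for all partial sums simultaneously and close a self-bounding inequality in $\max_{m\le n}\|S_m\|_{L_p}$, which naturally yields $\max_m\|S_m\|_{L_p}^2\le 2p\sum_i b_{i,n}$ rather than your recursion with $\E|S_n|^{p-2}$ on the right. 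Second, and fatally, your treatment of the remainder via $\bigl||S_n|^{p-2}-|S_i|^{p-2}\bigr|\le(p-2)\max(|S_n|,|S_i|)^{p-3}|S_n-S_i|$ loses a factor of order $\sqrt n$: already for i.i.d.\ bounded variables and $p=3$ it produces $\sum_i\E\bigl[|X_{i,n}|(S_n-S_i)^2\bigr]\asymp C_n\sigma^2n^2$, whereas closing the recursion requires this piece to be $O(C_n^2\sigma n^{3/2})$. The true remainder is indeed of order $\sigma^3n^{3/2}$, but only because of cancellation (after conditioning on $\mathcal{G}_i$, the term $\E[|S_i|X_{i,n}(S_n-S_i)]$ contributes nothing in the independent case) that your absolute-value bound throws away. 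This is precisely why the literature proofs Taylor-expand one increment at a time, telescoping $S_k=S_{k-1}+X_{k,n}$, so that only single increments $X_{k,n}^2$ are ever squared and the future enters only through $\E(S_l-S_{k-1}\mid\mathcal{F}_k)$, where the mixing hypothesis applies. (A minor separate inaccuracy: nondegeneracy does not force $\phi_n(0)=1$ --- it equals $1/2$ for i.i.d.\ symmetric signs --- though $\phi_n(0)\ge1/2$ always holds, which is all you need to absorb the diagonal term.)
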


The next result considers the $\alpha$- and $\tau$-mixing cases and is from \cite{merlevede2009bernstein}.

\begin{thm}[Theorem 2, \cite{merlevede2009bernstein}]
\label{lemma:sample_mean_Bern}
Let $\{X_{t,n}\}_{t\in\Z}$ be a stationary mean-zero sequence of dimension $d_n$ fixed to be 1. Suppose that the sequence satisfies either a geometric $\alpha$-mixing condition:
\begin{align*}
\alpha(\{X_{t,n}\}_{t\in\Z};m) \leq \textup{exp}(-\gamma_n m),~~{\rm for~}m=1,2,\ldots
\end{align*}
or a geometric $\tau$-mixing condition:
\begin{align*}
\tau(\{X_{t,n}\}_{t\in\Z};m,|\cdot|) \leq \textup{exp}(-\gamma_n m),~~{\rm for~}m=1,2,\ldots
\end{align*}
with some positive constant $\gamma_n$ that could depend on $n$, and there exists a positive constant $B_n$ such that $\sup_{i\geq 1}\|X_{i,n}\|_{L_\infty} \leq B_n$. Then there are positive constants $C_{1,n}$ and $C_{2,n}$ depending only on $\gamma_n$ such that for all $n\geq 2$ and positive $t$ satisfying $t < 1/[C_1B(\log n)^2]$, the following inequality holds:
\begin{align*}
\log[\E\exp(tS_n)] \leq \frac{C_{2,n}t^2(n\sigma_n^2+B_n^2)}{1-C_{1,n}tB_n(\log n)^2},
\end{align*}
where $\sigma_n^2$ is defined by 
\begin{align*}
\sigma_n^2 := {\rm Var}(X_{1,n}) + 2\sum_{i>1}\Big|{\rm Cov}(X_{1,n},X_{i,n})\Big|.
\end{align*}
\end{thm}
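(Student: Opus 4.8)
The plan is to bound the cumulant generating function $L_n(t) := \log \E \exp(t S_n)$ directly, since the stated estimate is precisely a bound on $L_n(t)$. The backbone of the argument is Bernstein's big-block/small-block decomposition, combined with a decoupling step that converts the geometric mixing rate into an exponentially small error when block sums are replaced by independent surrogates. For the $\tau$-mixing hypothesis the decoupling is supplied directly by the coupling representation of Theorem \ref{thm:dedecker}: a block of ``future'' coordinates may be replaced by an independent copy at an $L_1$ cost no larger than (block length) $\times\, \tau(\{X_{t,n}\};m,|\cdot|)$. For the $\alpha$-mixing hypothesis one instead invokes a Rio-type covariance/coupling inequality, whose error is governed by $\alpha(\{X_{t,n}\};m)$. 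Since both coefficients decay geometrically, the two cases run in parallel once this decoupling device is in hand.

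First I would fix big blocks of length $q$ and separating gaps (small blocks) of length $r$, with $r \asymp \gamma_n^{-1}\log n$ chosen so that the mixing coefficient across a gap is polynomially small, say $e^{-\gamma_n r} \le n^{-\kappa}$ for a large constant $\kappa$. The decoupling lemma then gives, for two index sets $I$ and $J$ separated by a gap of length $r$,
\begin{align*}
\log \E e^{t(S_I + S_J)} \le \log \E e^{t S_I} + \log \E e^{t S_J} + \mathrm{err}(r),
\end{align*}
where $\mathrm{err}(r)$ is controlled by $e^{-\gamma_n r}$ times polynomial factors in $t$, $B_n$, and the block lengths; these factors are harmless because $|S_I|, |S_J| \le q B_n$ and $t$ is small. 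I would then apply this recursively through a Cantor-like (dyadic) splitting of $[n]$: at each level cut the current index set into two nearly equal halves separated by a gap of length $r$, iterating for $\asymp \log_2 n$ levels until the surviving blocks have bounded length. Summing the per-level decoupling errors over all scales yields a total error that remains negligible against the main term, provided $\kappa$ was taken large enough.

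Once the block sums have been decoupled into independent surrogates, each contributes a factor I would bound with the elementary estimate for bounded, mean-zero variables: if $|Y| \le M$ and $\E Y = 0$ then $\log \E e^{tY} \le \tfrac{1}{2} t^2 \Var(Y)/(1 - c\, t M)$ for $tM$ small. Here $M \asymp q B_n$, while by stationarity $\Var(S_I) = \sum_{i,j \in I}\Cov(X_{i,n},X_{j,n}) \le |I|\,\sigma_n^2$, so that $\sum_k \Var(Y_k)$ is comparable to $n\sigma_n^2$. Collecting the per-block bounds reproduces the numerator $t^2(n\sigma_n^2 + B_n^2)$, the $B_n^2$ absorbing the boundary blocks, together with a denominator of the form $1 - c\, t B_n q (\log n)$; substituting $q \asymp \log n$ (needed so that within-block correlations are summable at the geometric rate) turns $B_n q$ into $B_n(\log n)$ and, with one further logarithmic factor from the $\asymp \log n$ recursion levels, produces the advertised $(\log n)^2$.

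The main obstacle is the control and bookkeeping of the accumulated decoupling errors across the $\asymp \log n$ recursion levels: one must verify that replacing dependent block sums by independent ones, level after level, does not degrade the Gaussian-plus-Poisson shape of the bound. This is exactly where geometric mixing is indispensable, for it lets the gap length $r$ grow only logarithmically while still making each coupling error polynomially small, so that the sum over all scales and all (at most $n$) blocks stays subdominant. Obtaining the exponent $2$ on $\log n$, rather than a larger power, requires balancing the block length $q$, the gap length $r$, and the number of levels simultaneously, and is the most delicate part of the estimate.
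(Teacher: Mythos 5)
A preliminary remark: the paper you are working from does not actually prove this statement --- it is a survey item quoted verbatim from \cite{merlevede2009bernstein}, with only a pointer to later refinements --- so your sketch has to be judged against the original Merlev\`ede--Peligrad--Rio argument. Your skeleton (recursive decoupling of block sums across gaps, followed by an elementary Laplace-transform bound on each decoupled block) is indeed the right general strategy, and your identification of the coupling of Theorem \ref{thm:dedecker} as the decoupling device in the $\tau$-mixing case is correct.

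However, there is a genuine gap in the decoupling step, and it is fatal as written. You fix a gap length $r \asymp \gamma_n^{-1}\log n$ at \emph{every} level of the dyadic recursion and assert that the decoupling error is $e^{-\gamma_n r}$ times ``polynomial factors in $t$, $B_n$, and the block lengths,'' harmless because $|S_I|,|S_J|\le qB_n$. That bound is false in your own scheme: at the coarse levels of the recursion $I$ and $J$ are halves of $[n]$, so $|S_I|$ can be as large as $nB_n/2$. Moreover, every decoupling device available here carries a factor \emph{exponential}, not polynomial, in $t\cdot|I|\cdot B_n$: the covariance inequality for $\alpha$-mixing gives $|\Cov(e^{tS_I},e^{tS_J})|\le 4\alpha(r)\,e^{t|I|B_n}e^{t|J|B_n}$, and the $\tau$-coupling, after exponentiating, gives an error of order $t|J|\tau(r)\,e^{t|I|B_n}e^{t|J|B_n}$. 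At the top of the admissible range, $t\approx 1/[CB_n(\log n)^2]$, this factor is $e^{cn/(\log n)^2}$, which annihilates $e^{-\gamma_n r}=n^{-\kappa}$ for any fixed $\kappa$; the recursion therefore collapses at all coarse scales, and the same problem appears in the sequential big-block/small-block variant. The repair is precisely the content of the proof in \cite{merlevede2009bernstein}: one needs a true Cantor-type construction in which the gap at each level is proportional to the current block length (up to logarithmic factors), so that $\gamma_n\cdot(\mbox{gap}) \gtrsim t\cdot(\mbox{block length})\cdot B_n$ holds uniformly over all levels while the total mass of discarded points stays $o(n)$; this balance is exactly feasible when $t\lesssim 1/[B_n(\log n)^2]$, and it --- not the bookkeeping you describe --- is where both the restriction on $t$ and the $(\log n)^2$ in the denominator come from. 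A secondary omission: the points discarded in the gaps (of total cardinality of order $n/\log n$) cannot simply be dropped; they must be handled by iterating the construction on them, with the contributions summed via a geometric-series argument.
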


We note here that the dependence of $C_{1,n}$ and $C_{2,n}$ on $\gamma_n$ could be explicitly calculated, as have been made in \cite{banna2016bernstein} and \cite{han2018moment}; also refer to the later Theorems \ref{thm:matrix1} and \ref{thm:matrix2}.

The next result considers the weak dependence case, and is the foundation of dependence measures proposed in \cite{doukhan1999new}. We refer to \cite{doukhan1999new} and \cite{Doukhan2007} for the relation between those weak dependences defined in Definition \ref{def:weakdependence} and the following Equations \eqref{ineq:CovCond} and \eqref{ineq:SumRho}.

\begin{thm}[a slight modification to Theorem 1 in \cite{Doukhan2007}]
Suppose $\{X_{i,n}\}_{i\in[n]}$ are real-valued random variables with mean 0, defined on a common probability space $(\Omega, \mathcal{A}, \P)$. Let $\Psi: \N^2 \rightarrow \N$ be one of the four functions defined in Table \ref{tab:WDeg}. Assume that there exist constants $K_n, M_n, L_{1,n}, L_{2,n} >0$, $a_n,b_n \geq 0$, and a nonincreasing sequence of real coefficients $\{\rho_n(i)\}_{i \geq 0}$ such that for any $u$-tuple $(s_1,\ldots, s_u)$ and $v$-tuple $(t_1,\ldots,t_v)$ with $1\leq s_1\leq \cdots \leq s_u < t_1\leq \cdots \leq t_v\leq n$, we have
\begin{align}
&\Bigl|\Cov\Bigl(\prod_{i=1}^u X_{s_i,n},\prod_{j=1}^v X_{t_j,n}\Bigr)\Bigr| \leq K_n^2 M_n^{u+v} \{(u+v)!\}^{b_n} \Psi(u,v)\rho_n(t_1-s_u), \label{ineq:CovCond} 
\end{align}
where the sequence $\{\rho_n(i)\}_{i \geq 0}$ satisfies
\begin{align}
&\sum_{s=0}^\infty (s+1)^k\rho_n(s) \leq L_{1,n}L_{2,n}^k(k!)^{a_n},~ \mbox{for any}~ k\in \N. \label{ineq:SumRho}
\end{align}
Moreover, we require that the following moment condition holds:
\begin{align*}
&\E |X_{i,n}|^k \leq (k!)^{b_n} M_n^k,~ i=1,\ldots,n, ~ \mbox{for any}~ k\in \N.
\end{align*}
Then, for any $n\geq 1$ and any $x>0$, we have
\begin{align*}
\P(S_n \geq x)\leq \exp\Bigl\{ -\frac{x^2}{C_{1,n}n + C_{2,n} x^{(2a_n+2b_n+3)/(a_n+b_n+2)}} \Bigr\},
\end{align*}
where $C_{1,n}$ and $C_{2,n}$ are constants that can be chosen to be
\begin{align*}
&C_{1,n}=2^{a_n+b_n+3}K_n^2M_n^2L_{1,n}(K_n^2\vee 2),~~ C_{2,n}=2\{M_nL_{2,n}(K_n^2\vee 2)\}^{1/(a_n+b_n+2)}.
\end{align*}
\end{thm}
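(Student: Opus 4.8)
The overall strategy is the classical route of bounding all even moments of $S_n$ and then converting to a tail bound by Markov's inequality together with an optimization over the moment order. The target is a two-regime moment bound of the schematic form
\[
\E S_n^{2q} \le (2q-1)!!\,(A_n n)^{q} + \{(2q)!\}^{a_n+b_n+2}\,B_n^{2q}, \qquad q = 1,2,\ldots,
\]
where $A_n \asymp K_n^2 M_n^2 L_{1,n}(K_n^2\vee 2)$ is a variance-type scale and $B_n \asymp M_n L_{2,n}(K_n^2\vee 2)$ a tail-type scale. The first summand, being of Gaussian type $(2q-1)!!\,(A_n n)^q \asymp (2 A_n n q/e)^q$, is what produces the $C_{1,n} n$ term in the denominator; the super-factorial second summand is what produces the heavy-tailed $C_{2,n}\,x^{(2a_n+2b_n+3)/(a_n+b_n+2)}$ term.

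The core of the argument, and the step I expect to be the main obstacle, is the combinatorial derivation of this moment bound. I would expand
\[
\E S_n^{2q} = \sum_{i_1,\ldots,i_{2q}} \E\big(X_{i_1,n}\cdots X_{i_{2q},n}\big),
\]
group the tuples by the order statistics of their indices, and run the Doukhan--Louhichi peeling recursion: within each ordered product, isolate a terminal block separated from the preceding block by the gap $t_1 - s_u$, and split the joint expectation into a covariance, controlled by the product-covariance hypothesis \eqref{ineq:CovCond}, plus a product of block expectations that feeds the induction at strictly lower order. Summing the covariance contributions over the location of the cut is where \eqref{ineq:SumRho} enters, through its weighted sums $\sum_s (s+1)^k \rho_n(s) \le L_{1,n}L_{2,n}^k (k!)^{a_n}$; together with the factor $\{(u+v)!\}^{b_n}$ already present in \eqref{ineq:CovCond}, the marginal moment bound $\E|X_{i,n}|^k \le (k!)^{b_n}M_n^k$, and the combinatorial count of admissible index configurations, these assemble into the super-factorial growth $\{(2q)!\}^{a_n+b_n+2}$. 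The near-diagonal contributions, where all gaps are $O(1)$ and are summed using only $\sum_s \rho_n(s) \le L_{1,n}$ (the $k=0$ instance of \eqref{ineq:SumRho}), produce the linear-in-$n$ variance scale $A_n n$. Since the statement is only a triangular-array modification of Theorem 1 in \cite{Doukhan2007}, the combinatorics themselves are unchanged; the real care is in verifying that every constant depends on $n$ solely through $K_n, M_n, L_{1,n}, L_{2,n}, a_n, b_n$ as claimed.

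Finally, I would convert via $\P(S_n \ge x) \le \E S_n^{2q}/x^{2q}$. Optimizing the Gaussian summand, with the choice $q \asymp x^2/(A_n n)$, gives $\exp(-c\,x^2/(A_n n))$; optimizing the super-factorial summand, with $2q \asymp (x/B_n)^{1/(a_n+b_n+2)}$, gives a stretched-exponential tail $\exp(-c'\, x^{1/(a_n+b_n+2)})$. Writing these two bounds as $e^{-u}$ and $e^{-v}$ and merging them through the elementary inequality $e^{-u}+e^{-v}\le 2\exp\{-uv/(u+v)\}$, together with the identity $uv/(u+v) = x^2/(C_{1,n}n + C_{2,n}x^{\gamma})$ for $\gamma = 2 - 1/(a_n+b_n+2) = (2a_n+2b_n+3)/(a_n+b_n+2)$, yields a single exponential of exactly the claimed form. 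The stated constants $C_{1,n}=2^{a_n+b_n+3}K_n^2M_n^2L_{1,n}(K_n^2\vee 2)$ and $C_{2,n}=2\{M_nL_{2,n}(K_n^2\vee 2)\}^{1/(a_n+b_n+2)}$, as well as the leading prefactor, are then reconciled by bookkeeping in the optimization. The only genuinely hard part is the moment bound; the conversion is a calculus exercise.
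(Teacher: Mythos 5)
Your proposal diverges from the paper's proof in both halves of the argument, and the second half as you describe it would actually fail. The paper's proof never bounds moments of $S_n$: following \cite{Doukhan2007}, it strengthens the combinatorial lemmas there (Lemmas 13 and 14) to control joint \emph{cumulants}, arriving at a bound of the form $|\Gamma_k(S_n)|\leq nK_n^2(K_n^2\vee 2)^{k-1}M_n^k(k!)^{b_n+1}\sum_{s=0}^{n-1}(s+1)^{k-2}\rho_n(s)$ for the $k$-th order cumulant of the sum; combined with \eqref{ineq:SumRho} this verifies a Statulevi\v{c}ius-type condition $|\Gamma_k(S_n)|\leq (k!)^{1+a_n+b_n}H_n/\Delta_n^{k-2}$ with $H_n$ proportional to $n$, and the stated inequality, with prefactor exactly $1$ and with exactly the displayed $C_{1,n}$ and $C_{2,n}$, then follows from the Bentkus--Rudzkis lemma invoked in \cite{Doukhan2007}. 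Your covariance-peeling has the same combinatorial core, but the two-regime moment bound you posit is asserted rather than derived, and retaining the pair-partition coefficient $(2q-1)!!$ on the Gaussian term (rather than a lossy $(2q)!$, which would degrade the sub-Gaussian regime to tails of order $\exp(-cx/\sqrt{n})$) is precisely the bookkeeping that the cumulant formulation is designed to handle; reproducing it by direct expansion of $\E S_n^{2q}$ over partitions is not a routine adaptation of \cite{doukhan1999new}.

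The decisive gap is the conversion step. The theorem asserts $\P(S_n\geq x)\leq\exp\{-x^2/(C_{1,n}n+C_{2,n}x^{\gamma_0})\}$ with $\gamma_0=(2a_n+2b_n+3)/(a_n+b_n+2)$, prefactor $1$, for \emph{all} $x>0$, and this cannot be reached by Markov's inequality on polynomial moments. Concretely, at $x=\|S_n\|_{L_2}$ Jensen's inequality gives $\E S_n^{2q}/x^{2q}\geq(\E S_n^2)^q/x^{2q}=1$ for every $q\geq 1$, so every tail bound derivable from your moment estimates (which upper-bound $\E S_n^{2q}$) is at least $1$ there; the claimed right-hand side at that point equals $\exp\{-\Var(S_n)/(C_{1,n}n+C_{2,n}\Var(S_n)^{\gamma_0/2})\}$, which is bounded away from $1$ whenever $\Var(S_n)\asymp n$ (e.g., for nondegenerate i.i.d.\ $X_{i,n}$, which satisfy all hypotheses). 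Your merging inequality $e^{-u}+e^{-v}\leq 2\exp\{-uv/(u+v)\}$ carries a prefactor $2$, and no enlargement of $C_{1,n},C_{2,n}$ converts a prefactor-$2$ bound into the stated prefactor-$1$ bound, since near the central regime the claim is strictly below $1$ while $2\exp\{-\cdot\}$ is not. A Chernoff argument is also unavailable: under your posited moment bound the series $\sum_{k}t^{2k}\{(2k)!\}^{a_n+b_n+2}B_n^{2k}/(2k)!$ diverges for every $t>0$, so $\E e^{tS_n}$ cannot be controlled this way. This is exactly why the paper passes through cumulants and the Bentkus--Rudzkis lemma; that step is not a calculus exercise, and it is the missing idea in your proposal.
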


\begin{proof}
The proof follows that of Theorem 1 in \cite{Doukhan2007} with minor modifications, as listed below. Restricted to this proof, we inherit the notation in \cite{Doukhan2007} and abandon the subscript $n$.

Equation (30) in \cite{Doukhan2007} can be strengthened to
\begin{align*}
\E |Y_j| \leq 2^{k-j-1}\{(k-j+1)!\}^b K^2M^k\rho(t_{i+1}-t_i).
\end{align*}
This leads to
\begin{align}\label{ineq:Lem13*}
|\overline{\E}(X_{t_1}\cdots X_{t_k})| \leq 2^{k-1} (k!)^b K^2 M^k \rho(t_{i+1}-t_i),
\end{align}
which corresponds to Lemma 13 in \cite{Doukhan2007}. Using \eqref{ineq:Lem13*}, we obtain that
\begin{align*}
\Bigl|\Gamma(X_{t_1},\ldots,X_{t_k})\Bigr| \leq& \sum_{\nu=1}^k \sum_{\bigcup_{p=1}^{\nu} I_p=I} N_{\nu} (I_1,\ldots,I_{\nu}) 2^{k-\nu} (k!)^b K^{2\nu}M^k \min_{1\leq i < k} \rho(t_{i+1}-t_i)\\
\leq& K^2(K^2\vee 2)^{k-1} M^k (k!)^b \{(k-1)!\} \min_{1\leq i < k} \rho(t_{i+1}-t_i).
\end{align*}
Thus, we have
\vspace{-10pt}
\begin{align}\label{ineq:Lem14*}
\Bigl|\Gamma_k(S_n)\Bigr| \leq nK^2(K^2\vee 2)^{k-1} M^k (k!)^{b+1} \sum_{s=0}^{n-1}(s+1)^{k-2}\rho(s).
\end{align}
Equation \eqref{ineq:Lem14*} corresponds to Lemma 14 in \cite{Doukhan2007}. The rest follows the same technique as in \cite{Doukhan2007}.
\end{proof}

Lastly we consider the functional dependence setting. The first result is a Rosenthal-type inequality, and is from \cite{liu2013probability}.

\begin{thm}[Theorem 1, \cite{liu2013probability}]\label{thm:wu1} Assume $\{X_{t,n}\}_{t\in\Z}$ is of dimensional $d_n=1$ and is generated from the model \eqref{eq:wu} with functional dependence measures $\theta_{m,p,n}$, which is of an additional subscript $n$ to highlight its dependence on $n$. Assume further that $\E X_{0,n}=0$, $\E|X_{0,n}|^p<\infty$, and $p>2$. Then we have, for any $n\geq 1$,
\begin{align*}
\norm{S_n}_{L_p} \leq &n^{1/2}\Big[ \frac{87p}{\log p}\sum_{j=1}^n\theta_{j,2,n}+3(p-1)^{1/2}\sum_{j=n+1}^\infty \theta_{j,p,n}+\frac{29p}{\log p}\norm{X_{0,n}}_{L_2}\Big]\\
&+n^{1/p}\Big[\frac{87p(p-1)^{1/2}}{\log p}\sum_{j=1}^n j^{1/2-1/p}\theta_{j,p,n}+\frac{29p}{\log p}\norm{X_{0,n}}_{L_p}  \Big].
\end{align*}
\end{thm}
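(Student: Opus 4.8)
The plan is to realize $S_n$ as a martingale sum through the projection-operator calculus attached to the innovation filtration, and then to feed it into a Burkholder--Rosenthal inequality carrying the optimal $p/\log p$ constant. Throughout I drop the subscript $n$, write $\mathcal{F}_j := \sigma(\ldots,\epsilon_{j-1},\epsilon_j)$, and set $\mathcal{P}_j Z := \E[Z\mid\mathcal{F}_j]-\E[Z\mid\mathcal{F}_{j-1}]$. The one structural input I need is the bound $\norm{\mathcal{P}_0 X_k}_{L_p}\le \theta_{k,p}$, which is precisely the coupling picture behind Theorem \ref{thm:dedecker}. With $X_k'$ the copy of $X_k$ in which $\epsilon_0$ is overwritten by the independent $\epsilon_0'$ (Definition \ref{def:wu}), one has $\E[X_k'\mid\mathcal{F}_0]=\E[X_k\mid\mathcal{F}_{-1}]$, because $\epsilon_0'$ and the future innovations are integrated out under $\mathcal{F}_0$; hence $\mathcal{P}_0X_k=\E[X_k-X_k'\mid\mathcal{F}_0]$, and the conditional-expectation contraction on $L_p$ gives $\norm{\mathcal{P}_0X_k}_{L_p}\le\norm{X_k-X_k'}_{L_p}=\theta_{k,p}$. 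By stationarity $\norm{\mathcal{P}_{i-j}X_i}_{L_p}\le\theta_{i-j,p}$ for all $i,j$, and likewise in the $L_2$ norm with $\theta_{i-j,2}$.

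I would then decompose $S_n$ by dependence lag. Writing $X_i=\sum_{k\ge 0}\mathcal{P}_{i-k}X_i$ and summing, $S_n=\sum_{k\ge 0}Y_k$ with $Y_k:=\sum_{i=1}^n\mathcal{P}_{i-k}X_i$; for each fixed $k$ the summands of $Y_k$ form a martingale-difference sequence in $i$ whose $L_p$ increments are at most $\theta_{k,p}$ and whose $L_2$ increments are at most $\theta_{k,2}$. Applying a sharp martingale Rosenthal inequality to each $Y_k$ splits its $L_p$ norm into a conditional-variance (``Gaussian'') part, responsible for the $n^{1/2}$ prefactor, the $L_2$ measure $\theta_{\cdot,2}$, and the $p/\log p$ constant, and a jump part $\big(\sum_i\norm{\mathcal{P}_{i-k}X_i}_{L_p}^p\big)^{1/p}$, which carries the $n^{1/p}$ prefactor and the $L_p$ measure $\theta_{\cdot,p}$. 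Summing over lags $k$, treating the lags $k\le n$ separately from the far lags $k>n$ (whose cumulative effect is the tail $\sum_{j>n}\theta_{j,p}$), and retaining the base-of-recursion terms $\norm{X_0}_{L_2}$ and $\norm{X_0}_{L_p}$, reproduces the two-bracket structure of the asserted bound.

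The delicate point, and what I expect to be the main obstacle, is to extract the \emph{sharp} main term $n^{1/2}\sum_{j\le n}\theta_{j,2}$ and the \emph{weighted} jump term $n^{1/p}\sum_{j\le n}j^{1/2-1/p}\theta_{j,p}$ rather than the crude $n^{1/2}\Theta_{0,p}$ and $n^{1/p}\Theta_{0,p}$ that a one-shot Burkholder bound yields. This requires controlling the fluctuation of the conditional variance $\sum_i\E[(\mathcal{P}_{i-k}X_i)^2\mid\mathcal{F}_{i-k-1}]$, not merely its mean, which is naturally handled by a multi-resolution/dyadic blocking: at scale $j$ one replaces each $X_i$ by its $j$-dependent conditional approximation (error tail $\Theta_{j,p}$), groups the $n$ observations into $\sim n/j$ essentially independent blocks of length $j$, and applies the optimal-constant Rosenthal inequality across blocks, so that each block contributes its $L_2$ size $\sim j^{1/2}$ while the across-block aggregation supplies $(n/j)^{1/2}$ for the variance part and $(n/j)^{1/p}$ for the jump part; the jump contribution then reads $(n/j)^{1/p}j^{1/2}\theta_{j,p}=n^{1/p}j^{1/2-1/p}\theta_{j,p}$, exactly the weight in the statement. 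Carrying the optimal $p/\log p$ and $(p-1)^{1/2}$ constants through this blocking in both regimes simultaneously, and verifying that the approximation errors telescope precisely into the tail sum $\sum_{j>n}\theta_{j,p}$ and the boundary terms, is the technical heart of the argument; everything after the two inequalities is routine weighted summation.
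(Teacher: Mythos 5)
First, a point of reference: this survey states Theorem \ref{thm:wu1} as a quotation of Theorem 1 of \cite{liu2013probability} and contains no proof of it, so your proposal can only be measured against the argument in that source. Your skeleton is in fact the same as theirs: the projection bound $\norm{\mathcal{P}_0 X_k}_{L_p}\le\theta_{k,p}$ (which you prove correctly via the identity $\E[X_k'\mid\mathcal{F}_0]=\E[X_k\mid\mathcal{F}_{-1}]$), reduction to $m$-dependent conditional approximations, blocking, a Rosenthal inequality with the sharp $p/\log p$ constant applied across independent blocks, Burkholder's inequality with the $(p-1)^{1/2}$ constant for the remaining martingale sums, and the computation $(n/j)^{1/p}\,j^{1/2}\,\theta_{j,p}=n^{1/p}j^{1/2-1/p}\theta_{j,p}$, which is exactly how the weight arises in the cited proof.

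The gap is that the part of the argument you actually carry out cannot prove the theorem, while the part that would prove it is only announced. Concretely: applying a martingale Rosenthal inequality to each $Y_k=\sum_{i=1}^n\mathcal{P}_{i-k}X_i$ and bounding the conditional-variance term by Minkowski's inequality gives $\bigl\|\sum_{i}\E[(\mathcal{P}_{i-k}X_i)^2\mid\mathcal{F}_{i-k-1}]\bigr\|_{L_{p/2}}\le n\theta_{k,p}^2$, and there is no mechanism within that route to replace $\theta_{k,p}$ by $\theta_{k,2}$; the lag-by-lag decomposition therefore yields only the crude bound of order $\frac{p}{\log p}\,n^{1/2}\sum_j\theta_{j,p}$, strictly weaker than the statement, whose entire value (e.g., for Theorem \ref{thm:wu2}) is that only $\theta_{j,2}$ enters the $n^{1/2}$ term. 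So that decomposition must be discarded, not refined; your remark that one must control the ``fluctuation of the conditional variance'' is also a misdiagnosis, since in the correct blocking argument no conditional variances appear at all --- blocks are made exactly (not ``essentially'') independent by odd/even splitting, and one uses plain block variances. What remains is your final paragraph, which is indeed the proof strategy of \cite{liu2013probability}, but stated as a plan: still missing are (i) the telescoping $X_i=\E[X_i\mid\epsilon_i]+\sum_{l}(Y_{i,l}-Y_{i,l-1})+(X_i-Y_{i,L})$ over dyadic scales $2^l$, $L\asymp\log_2 n$, whose scale-zero term is an i.i.d.\ sum and is what produces the two $\frac{29p}{\log p}$ terms (your ``base-of-recursion terms'' are left unexplained); (ii) the within-block estimates --- the $L_2$ norm of a length-$j$ block of scale-$j$ differences is at most $j^{1/2}\sum_k\theta_{k,2}$ by orthogonality of the projections across time for each fixed lag, and its $L_p$ norm is at most $(p-1)^{1/2}j^{1/2}\sum_k\theta_{k,p}$ by the $L_p$ martingale inequality, with $k$ ranging over the dyadic window; and (iii) the verification that the far lags $j>n$ assemble into $3(p-1)^{1/2}n^{1/2}\sum_{j>n}\theta_{j,p}$. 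These items are not routine weighted summation: they are where both brackets of the bound and every displayed constant come from, and without them the proposal identifies the right proof rather than giving one.
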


The second is a Nagaev-type inequality, and is also from \cite{liu2013probability}.

\begin{thm}[Theorem 2, \cite{liu2013probability}]\label{thm:wu2} Assume $\{X_{t,n}\}_{t\in\Z}$ is of dimensional $d_n=1$ and is generated from the model \eqref{eq:wu} with functional dependence measures $\theta_{m,p,n}$. Assume further that $\E X_{0,n}=0$, $\E|X_{0,n}|^p<\infty$, and $p>2$. Then we have the following bounds for any $n\geq 1$.
\begin{itemize}
\item[(i)] Denote 
\[
\mu_{j,n}:=(j^{p/2-1}\theta_{j,p,n}^p)^{1/(p+1)}~~~{\rm and }~~\nu_n:=\sum_{j=1}^{\infty}\mu_{j,n}<\infty.
\]
Then, for any $x>0$,
\begin{align*}
\P(|S_n|\geq x)\leq c_p\frac{n}{x^p}\Big(\nu_n^{p+1}+\norm{X_{0,n}}_{L_p}^p\Big)+4\sum_{j=1}^{\infty}\exp\left(-\frac{c_p\mu_{j,n}^2x^2}{n\nu_n^2\theta_{j,2,n}^2} \right) + 2\exp\left(-\frac{c_px^2}{n\norm{X_{0,n}}_{L_2}^2} \right),
\end{align*}
where $c_p>0$ is a constant only depending on $p$.
\item[(ii)] Assume that $\Theta_{m,p,n}:=\sum_{k=m}^{\infty}\theta_{k,p,n}=O(m^{-\alpha})$ as $m$ goes to infinity, with some constant $\alpha>1/2-1/p$. Then there exist absolute positive constants $C_1,C_2$ such that, for any $x>0$,
\[
\P(|S_n|\geq x)\leq \frac{C_1\Theta_{0,p,n}^pn}{x^p}+4G_{1-2/p}\left(\frac{C_2x}{\sqrt{n}\Theta_{0,p,n}}\right),
\]
where for any $y>0$, $q>0$, $G_q(y)$ is defined to be
\[
G_q(y)=\sum_{j=1}^{\infty}\exp(-j^qy^2).
\]
\item[(iii)] If $\Theta_{m,p,n}=O(m^{-\alpha})$ as $m$ goes to infinity, with some constant $\alpha<1/2-1/p$, then
\[
\P(|S_n|\geq x)\leq \frac{C_1\Theta_{0,p,n}^pn^{p(1/2-\alpha)}}{x^p}+4G_{(p-2)/(p+1)}\left(\frac{C_2x}{n^{(2p-1-2\alpha p)/(2+2p)}\Theta_{0,p,n}}\right).
\]
\end{itemize}
\end{thm}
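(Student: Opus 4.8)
The plan is to transfer the problem to sums of \emph{independent} blocks via coupling, apply the classical Fuk--Nagaev inequality at the block level, and then recombine across coupling gaps through an optimal allocation of the deviation budget; the heavy-tail bookkeeping is exactly the one already packaged in the Rosenthal-type bound of Theorem~\ref{thm:wu1}.

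First I would set $\mathcal F_k := \sigma(\ldots,\epsilon_{k-1,n},\epsilon_{k,n})$, introduce the projections $\mathcal P_k\,\cdot := \E[\,\cdot\mid\mathcal F_k] - \E[\,\cdot\mid\mathcal F_{k-1}]$, and write the (mean-zero) decomposition $S_n = \sum_{j\ge 0} D_{n,j}$ with $D_{n,j} := \sum_{i=1}^n \mathcal P_{i-j}X_{i,n}$. Stationarity and the coupling definition of $\theta_{j,p,n}$ give $\norm{\mathcal P_{i-j}X_{i,n}}_{L_p} = \norm{\mathcal P_0 X_{j,n}}_{L_p}\le\theta_{j,p,n}$, so the gap-$j$ increments are controlled by $\theta_{j,2,n}$ in $L_2$ and by $\theta_{j,p,n}$ in $L_p$. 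Equivalently, one may replace $X_{i,n}$ by the $j$-dependent coupling $\E[X_{i,n}\mid\epsilon_{i-j+1,n},\ldots,\epsilon_{i,n}]$ at an $L_p$-cost $\Theta_{j,p,n}$, which is what makes genuine independence available after blocking.

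The key step is a per-gap blocking argument. For a fixed gap $j\ge 1$, I would partition $\{1,\ldots,n\}$ into about $n/j$ consecutive blocks of length $j$; because the blocks are separated by a gap of order $j$ in the innovation index, their coupled contributions split into two interleaved families of \emph{independent} block sums. Each block sum has $L_2$- and $L_p$-sizes of order $j^{1/2}\theta_{j,2,n}$ and $j^{1/2}\theta_{j,p,n}$. Applying the classical Fuk--Nagaev inequality for independent summands to the $n/j$ block sums then yields, for a threshold $y_j>0$,
\[
\P(|D_{n,j}|\ge y_j) \lesssim \frac{(n/j)\,(j^{1/2}\theta_{j,p,n})^{p}}{y_j^{p}} + \exp\Bigl(-\frac{c_p\,y_j^2}{n\theta_{j,2,n}^2}\Bigr) = \frac{n\,\mu_{j,n}^{p+1}}{y_j^{p}} + \exp\Bigl(-\frac{c_p\,y_j^2}{n\theta_{j,2,n}^2}\Bigr),
\]
the identity $(n/j)(j^{1/2}\theta_{j,p,n})^{p}=n\,j^{p/2-1}\theta_{j,p,n}^{p}=n\mu_{j,n}^{p+1}$ being precisely where the weight $j^{p/2-1}$ (equivalently $j^{1/2-1/p}$ after a $p$-th root, as in Theorem~\ref{thm:wu1}) is produced.

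It remains to allocate the budget. Writing $\P(|S_n|\ge x)\le\sum_{j\ge 0}\P(|D_{n,j}|\ge y_j)$ with $\sum_j y_j\le x$ and choosing $y_j = x\,\mu_{j,n}/\nu_n$, the exponential terms become $\exp(-c_p\mu_{j,n}^2 x^2/(n\nu_n^2\theta_{j,2,n}^2))$, while the polynomial terms telescope through $\sum_j\mu_{j,n}=\nu_n$ into $n\nu_n^{p+1}/x^p$; the diagonal gap $j=0$ (block length one, $n$ nearly independent terms of size $\norm{X_{0,n}}_{L_p}$) contributes the separate $\norm{X_{0,n}}_{L_p}^p$ and $\exp(-c_p x^2/(n\norm{X_{0,n}}_{L_2}^2))$ pieces. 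This gives part (i). For (ii) and (iii) I would insert the decay $\Theta_{m,p,n}=O(m^{-\alpha})$, which fixes the sizes of $\theta_{j,p,n}$, $\mu_{j,n}$ and $\nu_n$, and then sum the level-wise exponentials into $G_q$. The main obstacle is twofold: realizing genuinely independent blocks through the coupling while keeping the coupling error subordinate (this forces a joint optimization of block length against a truncation level), and the bookkeeping that collapses the gap sums into the stated closed forms. The latter hinges on the threshold $\alpha=1/2-1/p$: when $\alpha>1/2-1/p$ the weighted series $\sum_j j^{1/2-1/p}\theta_{j,p,n}$ converges, the effective scale is $\sqrt n$, and the exponentials sum to $G_{1-2/p}$; when $\alpha<1/2-1/p$ the partial sums grow like a power of $n$, forcing the larger normalization $n^{(2p-1-2\alpha p)/(2+2p)}$ and the index $(p-2)/(p+1)$. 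Verifying that these exponents emerge \emph{exactly}, uniformly in $n$, is the delicate part.
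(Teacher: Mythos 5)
This theorem is quoted in the paper from \cite{liu2013probability} without proof, so your proposal can only be judged against the argument in that reference. For part (i), your scheme is essentially that argument, and it visibly reconstructs the stated bound: the factor $4$ comes from two-sidedness times the two interleaved block families, the factor $2$ from the i.i.d.\ level $j=0$, the identity $(n/j)(j^{1/2}\theta_{j,p,n})^p=n\mu_{j,n}^{p+1}$ produces the weights, and the allocation $y_j=x\mu_{j,n}/\nu_n$ gives both displayed terms. One imprecision should be fixed: the interleaved-blocking step is simply false for your $D_{n,j}=\sum_i\mathcal{P}_{i-j}X_{i,n}$, whose summands are $\mathcal{F}_{i-j}$-measurable and hence depend on the \emph{infinite} past; it is valid only for the telescoped finite-window increments $Y_{i,j}-Y_{i,j-1}$, where $Y_{i,j}:=\E[X_{i,n}\mid\epsilon_{i-j,n},\dots,\epsilon_{i,n}]$. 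These two decompositions are not ``equivalent'': the whole proof must be run on the latter, for which $(j+1)$-dependence, a within-block martingale-difference structure (order the indices $i$ decreasingly), and the bound $\norm{Y_{i,j}-Y_{i,j-1}}_{L_p}\leq\theta_{j,p,n}$ all hold; with that substitution part (i) goes through.

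The genuine gap is in parts (ii) and (iii). You claim the decay $\Theta_{m,p,n}=O(m^{-\alpha})$ ``fixes the sizes of $\theta_{j,p,n}$, $\mu_{j,n}$ and $\nu_n$,'' after which one sums the level-wise exponentials. It does not: the hypothesis controls only \emph{tail sums}, and under $\alpha>1/2-1/p$ alone $\nu_n$ may be infinite, so part (i) can be vacuous in exactly the regime (ii) is supposed to cover. Concretely, take $\theta_{j,p,n}\asymp j^{-1-\alpha}$; then $\Theta_{m,p,n}\asymp m^{-\alpha}$, while $\mu_{j,n}\asymp j^{(p/2-1-(1+\alpha)p)/(p+1)}$ and $\nu_n=\sum_j\mu_{j,n}<\infty$ if and only if $\alpha>1/2$. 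Since $p>2$, the window $1/2-1/p<\alpha\leq 1/2$ is nonempty, and there (ii) cannot be obtained from (i) by any bookkeeping. The correct route replaces the gap-by-gap decomposition with a dyadic regrouping: work with $W_{i,k}:=\E[X_{i,n}\mid\epsilon_{i-2^{k},n},\dots,\epsilon_{i,n}]-\E[X_{i,n}\mid\epsilon_{i-2^{k-1},n},\dots,\epsilon_{i,n}]$, whose $L_p$-norms are bounded by the tail sum $\Theta_{2^{k-1},p,n}=O(2^{-k\alpha})$ --- the quantity the hypothesis actually controls --- then block at scale $2^k$, apply Fuk--Nagaev to the roughly $n2^{-k}$ independent block sums, and allocate the budget geometrically over scales (e.g.\ $y_k\propto x\,2^{-\delta k}$ with $0<\delta<\alpha-(1/2-1/p)$ in case (ii)). The dichotomy at $\alpha=1/2-1/p$ then emerges because the polynomial terms $n\,2^{k(p/2-1)}\Theta_{2^{k-1},p,n}^p\,y_k^{-p}$ sum geometrically when $\alpha>1/2-1/p$ but are dominated by the top scale $2^k\asymp n$ when $\alpha<1/2-1/p$, which is what produces the factor $n^{p(1/2-\alpha)}$ and the different $G$-index in (iii). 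Without this regrouping, your plan stalls at (ii).
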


It should be noted that Theorems \ref{thm:wu1} and \ref{thm:wu2} actually apply to cases beyond the sample sum, and the same inequalities hold for the partial sum process $S_n^*:=\max_{1\leq k\leq n}|\sum_{i=1}^k X_{i,n}|$. However, if $S_n$, instead of $S_n^*$, is of interest, Theorem \ref{thm:wu2} could be further strengthened, as was made in \cite{wu2016performance}. To this end, let's first introduce the dependence adjusted norm (DAN) for the process $\{X_{t,n}\}_{t\in\Z}$ as 
\begin{equation}
    \label{eq:dan}
\norm{X_{\cdot, n}}_{p,\alpha}:= \sup_{m\geq 0}(m+1)^{\alpha}\Theta_{m,p,n}.
\end{equation}

\begin{thm}\label{th:ww}[Theorem 2, \cite{wu2016performance}] Assume $\{X_{t,n}\}_{t\in\Z}$ is of dimensional $d_n=1$ and is generated from the model \eqref{eq:wu} with functional dependence measures $\theta_{m,p,n}$. Assume further that $\E X_{0,n}=0$ and $\norm{X_{\cdot,n}}_{p,\alpha}<\infty$ for some $p>2$ and $\alpha>0$. Let
\[
a_n=\begin{cases}
1,  & ~~~\text{if }\alpha>1/2-1/p,\\
n^{p/2-1-\alpha p},&~~~\text{if }\alpha<1/2-1/p.
\end{cases}
\]
Then there exists constants $C_1,C_2,C_3$ only depending on $q$ and $\alpha$ such that, for all $x>0$, we have
\[
\P(|S_n|\geq x)\leq C_1\frac{a_nn\norm{X_{\cdot,n}}_{p,\alpha}^p}{x^p} + C_2\exp\Big(-\frac{C_3x^2}{n\norm{X_{\cdot,n}}_{2,\alpha}^2}  \Big).
\]
\end{thm}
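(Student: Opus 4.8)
The plan is to run a Fuk--Nagaev (truncation plus Bernstein) argument whose two outputs---a polynomial ``heavy-tail'' term and a Gaussian term---are matched respectively to the two additive pieces already present in the Rosenthal bound of Theorem~\ref{thm:wu1}. The dependence-adjusted norms enter only through two scalar quantities: the variance scale, via $\Theta_{0,2,n}\le\norm{X_{\cdot,n}}_{2,\alpha}$ (which also gives $\norm{X_{\cdot,n}}_{2,\alpha}\le\norm{X_{\cdot,n}}_{p,\alpha}<\infty$), and the weighted tail sum $\sum_{j\ge1}j^{1/2-1/p}\theta_{j,p,n}$. Since every constant will depend only on $p$ and $\alpha$, I suppress the subscript $n$ in the sketch.

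First I would fix a truncation level $M=M(x,n)$ and split $X_i=T_M(X_i)+R_i$, where $T_M(u):=\max(-M,\min(u,M))$. Because $T_M$ is $1$-Lipschitz, the coupling interpretation behind Definition~\ref{def:wu} gives $\theta_{k,p}(T_M(X))\le\theta_{k,p}$, so the bounded part $\breve S_n:=\sum_i(T_M(X_i)-\E T_M(X_i))$ inherits the hypotheses. The remainder $R_i$ vanishes off the rare event $\{|X_i|>M\}$; hence not only its single-variable moments but, by an interpolation/Cauchy--Schwarz estimate, its \emph{$L_2$} functional-dependence measures $\theta_{k,2}(R)$ are small (of order $\theta_{k,p}\,(\norm{X_0}_{L_p}^p/M^p)^{1/2-1/p}$). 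Consequently, when Theorem~\ref{thm:wu1} is applied to $\{R_i\}$ the dangerous $n^{1/2}$-scaled (Gaussian) piece is negligible and only the $n^{1/p}$-scaled heavy piece survives. Markov's inequality, $\P(|\hat S_n|\ge x/2)\le 2^p\norm{\hat S_n}_{L_p}^p/x^p$, then produces the first term. Feeding $\Theta_{m,p}\le\norm{X_{\cdot,n}}_{p,\alpha}(m+1)^{-\alpha}$ into $\sum_{j=1}^n j^{1/2-1/p}\theta_{j,p}$ yields the factor $a_n$: this sum is $O(1)$ exactly when $\alpha>1/2-1/p$ and grows like $n^{1/2-1/p-\alpha}$ when $\alpha<1/2-1/p$, and raising the $n^{1/p}$-scaled piece to the $p$-th power gives $\asymp a_n n\norm{X_{\cdot,n}}_{p,\alpha}^p$ in both regimes.

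For the bounded part I would use the martingale-projection decomposition $\breve S_n=\sum_{k\ge0}M_{n,k}$ with $M_{n,k}:=\sum_{i=1}^n\mathcal{P}_{i-k}T_M(X_i)$ and $\mathcal{P}_j\,\cdot:=\E[\cdot\mid\mathcal{F}_j]-\E[\cdot\mid\mathcal{F}_{j-1}]$, whose differences are bounded by $2M$ in $L_\infty$ and by $\theta_{k,2}$ in $L_2$. Summing the $L_2$ bounds gives the correct variance proxy $\Var(\breve S_n)\le n\Theta_{0,2}^2\le n\norm{X_{\cdot,n}}_{2,\alpha}^2$. A Fuk--Nagaev/Bernstein inequality applied to the aggregated martingale then yields $C_2\exp\{-C_3x^2/(n\norm{X_{\cdot,n}}_{2,\alpha}^2)\}$ in the range where the quadratic term dominates, the boundedness correction (the term linear in $x$ carrying the factor $M$) being pushed into the range where the polynomial term already dominates; $M$ is chosen to splice the two regimes at the crossover $x\asymp n\norm{X_{\cdot,n}}_{2,\alpha}^2/M$.

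The main obstacle is producing a \emph{single} clean exponential, in contrast to the infinite sum $G_q$ of exponentials in Theorem~\ref{thm:wu2}(ii)--(iii). Treating the decomposition lag-by-lag and summing the per-lag exponentials $e^{-c_k x^2}$ is exactly what regenerates a $G_q$-type expression; avoiding it requires applying the Fuk--Nagaev bound \emph{once} to the full martingale $\breve S_n$, using its aggregate variance and aggregate $L_\infty$ bound, and it is here that specialising to $S_n$ rather than the maximal sum $S_n^*$ is used. A second delicate point is the truncation optimisation: one must check that the choice of $M$ is consistent in both dependence regimes and that the polynomially-decaying portion of the exponential bound present for moderate $x$ is genuinely absorbable into the already-established $a_n n\norm{X_{\cdot,n}}_{p,\alpha}^p/x^p$ term.
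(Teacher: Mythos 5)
A preliminary remark: the paper you were checked against states this result purely as a quotation of Theorem 2 in \cite{wu2016performance} and gives no proof of it, so your sketch has to be measured against the argument in that source (the same machinery as in \cite{liu2013probability} and in Theorem 6.2 of \cite{zhang2017gaussian}). Your outer shell is consistent with that argument: the truncation $X_i=T_M(X_i)+R_i$, the Lipschitz stability $\theta_{k,p}(T_M(X))\le\theta_{k,p}$, the H\"older estimate for $\theta_{k,2}(R)$, Markov plus the Rosenthal bound of Theorem~\ref{thm:wu1} for the heavy part, and the bookkeeping $\Theta_{m,p,n}\le\norm{X_{\cdot,n}}_{p,\alpha}(m+1)^{-\alpha}$ that produces $a_n$ are all sound. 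The genuine gap sits exactly at the step you yourself flag as the main obstacle: you propose to ``apply the Fuk--Nagaev bound \emph{once} to the full martingale $\breve S_n$, using its aggregate variance and aggregate $L_\infty$ bound.'' But $\breve S_n$ is not a martingale. Each $M_{n,k}=\sum_{i}\mathcal{P}_{i-k}T_M(X_{i,n})$ is a martingale with respect to its own shifted filtration; their sum over $k$ has no martingale structure, and its one-step increments $T_M(X_{i,n})-\E T_M(X_{i,n})$ are merely bounded, dependent random variables. There is no Freedman/Fuk--Nagaev-type inequality whose hypotheses are only ``a variance proxy plus a uniform bound'' for such a sum; if there were, the theorem would follow from truncation alone for \emph{any} bounded dependent sequence with summable covariances, which is false. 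So, as written, the single clean exponential is never validly produced: your argument either invokes a nonexistent tool or falls back on the per-lag union bound, which regenerates precisely the $G_q$-type sums of Theorem~\ref{thm:wu2} you set out to avoid.

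The mechanism that actually yields one exponential is different and is the idea missing from your sketch. One groups the lags dyadically, writing $S_n$ (after centering and approximation) as a sum of $O(\log n)$ components, the $j$-th of which is $2^j$-dependent with dependence-adjusted $L_2$ scale at most a constant times $\norm{X_{\cdot,n}}_{2,\alpha}2^{-\alpha (j-1)}$; each component is split into alternating blocks of length $2^j$ so that the block sums form two collections of \emph{independent} random variables, to which Nagaev's inequality for independent sums is applied; finally the components are recombined by a union bound in which component $j$ is allotted a share $\lambda_j x$ of the deviation, with $\sum_j\lambda_j\le 1$ and $\lambda_j$ geometric but decaying strictly slower than the $L_2$ scales. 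Because the exponents then behave like $\lambda_j^2x^2/(n\Theta_{2^{j-1},2,n}^2)\gtrsim 2^{\alpha j}x^2/(n\norm{X_{\cdot,n}}_{2,\alpha}^2)$, they grow geometrically in $j$, and the resulting sum of exponentials is dominated by a constant multiple of its first term, giving the single term $C_2\exp\{-C_3x^2/(n\norm{X_{\cdot,n}}_{2,\alpha}^2)\}$. This weighted-allocation collapse across dyadic scales --- not a one-shot application of a martingale inequality to $\breve S_n$ --- is what distinguishes the proof of this theorem from that of Theorem~\ref{thm:wu2}, and it is the step you would need to supply to complete your argument.
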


\subsection{Sample sum for random vectors with $d_n \ge 1$}
In this section we will concern the sample sum case when $d = d_n$ potentially diverges to infinity with $n$. Let $\{X_{t,n}\}_{t\in\Z}$ be a $d_n$-dimensional real time series of the form (\ref{eq:wu}):
\begin{align}\label{eq:vwu}
X_{t,n}=g_n(\cdots,\epsilon_{t-1,n},\epsilon_{t,n})=
\begin{pmatrix}
g_{1,n}(\cdots,\epsilon_{t-1,n},\epsilon_{t,n})\\
g_{2,n}(\cdots,\epsilon_{t-1,n},\epsilon_{t,n})\\
\vdots\\
g_{d_n,n}(\cdots,\epsilon_{t-1,n},\epsilon_{t,n})
\end{pmatrix}
.
\end{align}
Assume $\E X_{t,n} = 0$ and let $S_n = \sum_{i=1}^n X_{i,n}$. Theorem \ref{wz17} below provides a tail probability for $|S_n|_\infty$, where for any vector $v=(v_1,\ldots,v_d)^\top$ let $|v|_\infty=\max_{j\in[d]}|v_j|$. Assume $\E |X_{i,n}|^q < \infty, q > 2$, and define the uniform functional dependence measure
\begin{equation}
    \delta_{i, q,n} = \| |X_{i,n} - X_{i, \{0\},n}|_\infty \|_{L_q},
\end{equation}
where
\begin{equation}
     |X_{i,n} - X_{i, \{0\},n}|_\infty := \max_{j \le d} \Big|g_{j,n}(\cdots,\epsilon_{i-1,n},\epsilon_{i,n})- g_{j,n}(\cdots,\epsilon_{-2,n},\epsilon_{-1,n},\epsilon_0',\epsilon_{1,n},\ldots,\epsilon_{i,n})\Big|.
\end{equation}
Define the vector version DAN (cf. (\ref{eq:dan})) as 
\begin{equation}
    \| |X_{.,n}|_\infty \|_{q, \alpha} = \sup_{m \ge 0} (m+1)^\alpha \Omega_{m, q,n}, 
    \mbox{ where } \Omega_{m, q,n} = \sum_{i=m}^\infty \delta_{i, q,n}.
\end{equation}
The constants $C_{q,\alpha} > 0$ in Theorem \ref{wz17} only depend on $q$ and $\alpha$ and their values may change from place to place.

\begin{thm}\label{wz17}[Theorem 6.2 in \cite{zhang2017gaussian}]
Assume $ \| |X_{.,n}|_\infty \|_{q, \alpha} < \infty$, where $q > 2, \alpha > 0$. Let $\Psi_{2, \alpha,n} = \max_{j \le d} \| X_{\cdot j,n} \|_{q, \alpha}$ be the counterpart of $ \| |X_{.,n}|_\infty \|_{q, \alpha}$ with the maximum over $j\in[d_n]$ taken outside instead of inside the expectation. Let $\ell_n = \max(1, \log d_n)$. (i) If $\alpha > 1/2 - 1/q$, then for all $x \ge C_{q,\alpha} (\sqrt{n \ell_n} \Psi_{2, \alpha,n} + n^{1/q} \ell_n^{3/2} \| |X_{.,n}|_\infty \|_{q, \alpha} )$, we have 
\begin{align*}
\P\Big\{ |S_n |_\infty \geq  x \Big\} \leq  C_{q, \alpha} { {n \ell_n^{q/2} \| |X_{.,n}|_\infty \|_{q, \alpha}^q} \over x^q} +
C_{q, \alpha} \exp\Big(- C_{q,\alpha} {x^2 \over {n \Psi_{2, \alpha,n} ^2}}\Big).
\end{align*}
(ii) If $\alpha < 1/2 - 1/q$, then for all $x \ge C_{q,\alpha}(\sqrt{n \ell_n} \Psi_{2, \alpha,n} + n^{1/2-\alpha} \ell_n^{3/2} \| |X_{.,n}|_\infty \|_{q, \alpha} ) $, we have 
\begin{align*}
\P\Big\{ |S_n |_\infty \geq x \Big\} \leq  C_{q, \alpha} { {n^{q/2-\alpha q} \ell_n^{q/2} \| |X_{.,n}|_\infty \|_{q, \alpha}^q} \over x^q} +
C_{q, \alpha} \exp\Big(- C_{q,\alpha} {x^2 \over {n \Psi_{2, \alpha,n} ^2}}\Big).
\end{align*}
\end{thm}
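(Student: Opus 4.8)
The plan is to prove the bound directly for the maximum $|S_n|_\infty=\max_{j\le d_n}|S_{n,j}|$ rather than through a crude union bound over the $d_n$ coordinates. Applying the scalar Nagaev inequality (Theorem~\ref{th:ww}) coordinatewise and union bounding would produce a polynomial term carrying the dimension as a prefactor, of order $d_n\,a_n\,n\,\max_{j\le d_n}\|X_{\cdot j,n}\|_{q,\alpha}^q/x^q$, whereas the target term carries only $\ell_n^{q/2}=(\log d_n)^{q/2}$. Hence the heavy-tailed (polynomial) contribution must be controlled by a genuinely multivariate maximal inequality; only the sub-Gaussian (exponential) contribution can afford a union bound, and only because the admissible range $x\gtrsim\sqrt{n\ell_n}\,\Psi_{2,\alpha,n}$ is large enough to absorb the resulting factor $d_n$.

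First I would tame the dependence using the projection calculus of \cite{wu2005nonlinear}. Writing $\mathcal{F}_{i}=\sigma(\ldots,\epsilon_{i-1,n},\epsilon_{i,n})$ and $\mathcal{P}_k\cdot=\E[\cdot\mid\mathcal{F}_k]-\E[\cdot\mid\mathcal{F}_{k-1}]$, decompose $X_{i,n}=\sum_{\ell\ge0}\mathcal{P}_{i-\ell}X_{i,n}$, so that $S_n=\sum_{\ell\ge0}\sum_{i=1}^n\mathcal{P}_{i-\ell}X_{i,n}$ and, for each fixed lag $\ell$, the array $\{\mathcal{P}_{i-\ell}X_{i,n}\}_i$ is a martingale-difference sequence with coordinatewise magnitude dominated by $\delta_{\ell,q,n}$; equivalently one may use the dyadic $2^k$-dependent blocks $X_i^{(2^k)}-X_i^{(2^{k-1})}$ whose $\ell_\infty$-magnitude is governed by the tail sums $\Omega_{2^{k-1},q,n}$. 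The decay encoded in $\| |X_{.,n}|_\infty \|_{q,\alpha}$ renders these contributions geometrically summable, and the split $\alpha\gtrless 1/2-1/q$ decides whether the summation is dominated by finitely many short-range terms or accumulates polynomially in $n$, which is exactly the origin of the factor $a_n$ inherited from Theorem~\ref{th:ww} (with $a_n\,n=n^{q/2-\alpha q}$ in the long-range case).

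Next, with a truncation level tied to $x$ and $\ell_n$, I would split each block into a bounded part and a remainder and treat the two separately. For the bounded part I would apply a Bernstein-type inequality to each of the $d_n$ coordinate sums, whose variance proxy is governed by $\Psi_{2,\alpha,n}^2$, and then union bound over $j\le d_n$: since $x\ge C_{q,\alpha}\sqrt{n\ell_n}\,\Psi_{2,\alpha,n}$ forces $x^2/(n\Psi_{2,\alpha,n}^2)\gtrsim\ell_n\ge\log d_n$, the prefactor $d_n$ is absorbed into the exponent at the cost of a constant, yielding $\exp(-C_{q,\alpha}x^2/(n\Psi_{2,\alpha,n}^2))$. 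For the remainder I would instead establish a maximal $q$-th moment bound $\E|S_n|_\infty^q\lesssim (n\ell_n\Psi_{2,\alpha,n}^2)^{q/2}+a_n\,n\,\ell_n^{q/2}\| |X_{.,n}|_\infty \|_{q,\alpha}^q$, a vector-valued Rosenthal inequality extending Theorem~\ref{thm:wu1}; here the factor $\ell_n^{q/2}=(\log d_n)^{q/2}$ is the familiar price of passing to the maximum over $d_n$ coordinates, and Markov's inequality in the regime where the variance term is dominated leaves precisely $n\,\ell_n^{q/2}\| |X_{.,n}|_\infty \|_{q,\alpha}^q/x^q$ (and its long-range analogue with $n^{q/2-\alpha q}$).

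The main obstacle is the maximal moment estimate together with its bookkeeping. Obtaining $(\log d_n)^{q/2}$ rather than $d_n$ forces the heavy-tail part to be controlled through an inequality built on the \emph{max-inside} quantity $\delta_{i,q,n}=\| |X_{i,n}-X_{i,\{0\},n}|_\infty\|_{L_q}$, and one must check that the dyadic summation over lags, the truncation level, and the union-bound threshold can be chosen \emph{simultaneously} so that the exponents $\ell_n^{3/2}$ (in the admissible range of $x$) and $\ell_n^{q/2}$ (in the tail) emerge with matching constants, with no cross terms from the truncation spoiling either bound. Carrying the short- versus long-range case distinction through this balancing, uniformly over the diverging dimension, is the delicate technical heart that distinguishes the argument from its scalar counterpart in Theorem~\ref{th:ww}.
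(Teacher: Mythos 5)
First, a point of fact: the paper itself contains no proof of Theorem~\ref{wz17}; the statement is quoted verbatim from Theorem 6.2 of \cite{zhang2017gaussian}, so there is no in-paper argument to compare yours against, and your proposal has to be judged against the proof in that source. At the level of architecture your sketch is on target: approximation of the dependence via Wu's projection/$m$-dependence calculus, truncation, a Bernstein bound plus union bound over the $d_n$ coordinates for the sub-Gaussian contribution (your observation that the restriction $x \ge C_{q,\alpha}\sqrt{n\ell_n}\,\Psi_{2,\alpha,n}$ is exactly what lets the factor $d_n$ be absorbed into the exponent is correct), and the short-/long-range dichotomy producing $a_n$.

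The genuine gap is the step that produces the polynomial term. You posit a maximal Rosenthal-type bound $\E|S_n|_\infty^q \lesssim (n\ell_n\Psi_{2,\alpha,n}^2)^{q/2} + a_n\, n\, \ell_n^{q/2}\,\| |X_{.,n}|_\infty\|_{q,\alpha}^q$ and then apply Markov. But the moment inequality that this machinery actually delivers (Theorem 6.1 of \cite{zhang2017gaussian}) is $\| |S_n|_\infty\|_{L_q} \lesssim \sqrt{n\ell_n}\,\Psi_{2,\alpha,n} + n^{1/q}\ell_n^{3/2}\| |X_{.,n}|_\infty\|_{q,\alpha}$, with $\ell_n^{3/2}$ rather than $\ell_n^{1/2}$ --- which is precisely why the admissible range of $x$ in the theorem carries $n^{1/q}\ell_n^{3/2}$. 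Markov from that bound yields a heavy-tail term of order $n\,\ell_n^{3q/2}\| |X_{.,n}|_\infty\|_{q,\alpha}^q/x^q$, off by a factor $\ell_n^{q}$ from the stated $n\,\ell_n^{q/2}\| |X_{.,n}|_\infty\|_{q,\alpha}^q/x^q$. In fact, the theorem's own form already rules out your mechanism: the range carries $\ell_n^{3/2}$ while the tail term equals $\bigl(n^{1/q}\ell_n^{1/2}\| |X_{.,n}|_\infty\|_{q,\alpha}/x\bigr)^q$, i.e.\ effectively $\ell_n^{1/2}$; if both came from Markov applied to a single moment bound, these two log-powers would have to agree. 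In \cite{zhang2017gaussian} the tail is derived directly, by applying a Fuk--Nagaev-type inequality for sums of independent Banach-space-valued vectors to blocks of the $m$-dependent approximation: there the polynomial term is generated by truncation/individual-summand moments of the form $\sum_i \E|\cdot|_\infty^q/x^q$, and the moment inequality is used only to control the centering term $\E|S_n|_\infty$ --- which is exactly where $\ell_n^{3/2}$ enters the admissible range but not the tail. So your plan either rests on an asserted moment bound that is stronger than what your own route can establish, or it must be restructured so that the heavy-tail term never passes through Markov on the full sum.
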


\begin{example}\label{eq:LLP}
As an application of Theorem \ref{wz17}, consider the following example, with the subscript $n$ omitted for presentation simplicity. Let $W_i = \sum_{j=0}^\infty a_j \epsilon_{i-j}$ be a linear process, where $\epsilon_j$ are i.i.d. innovations with finite $q$th moment $\mu_q := \|\epsilon_i \|_{L_q} < \infty$, $q > 2$, and $a_j$ are coefficients satisfying $a_* := \sup_{m \ge 0} (m+1)^\alpha \sum_{i=m}^\infty |a_i| < \infty$. Let $X_{i j} = g_j(W_i) - E  g_j(W_i) $, where $g_j$ are Lipschitz continuous functions with constants bounded by $L$.  Then $|X_i - X_{i, \{0\}}|_\infty \le L |a_i| | \epsilon_0 - \epsilon_0'|$ and $\delta_{i, q} \le 2 L |a_i| \mu_q $. The dependence adjusted norms $\| X_{\cdot j} \|_{q, \alpha} \le 2 L \mu_q a_*$ and $\| |X_.|_\infty \|_{q, \alpha} \le 2 L \mu_q a_*$. In comparison with Theorem \ref{th:ww}, the bound in Theorem \ref{wz17} is sharp up to a multiplicative logarithmic factor $(\log d)^{q/2}$, adjusting for multi-dimensionality. 
\end{example}

\begin{example}
(Largest eigenvalues of sample auto-covariance matrices) Let $W_i$ in Example \ref{eq:LLP} be of the form of stationary causal process (\ref{eq:wu}) with $\E W_i = 0$, $\E |W_i|^q < \infty$, $q > 2$. Again let's omit the subscript $n$ for no confusion will be made. Let $a_i = \| W_i - W_i'\|_{L_q}$ be the associated functional dependence measure, and assume the dependence adjusted norm $\| X_\cdot\|_{q, \alpha} < \infty$, $\alpha > 1/2 - 1/q$. Let $S_n(\theta) = \sum_{t=1}^n W_t \exp(\sqrt{-1} t \theta)$, $0 \le \theta \le 2 \pi$, be the Fourier transform of $(W_t)_{t=1}^n$, where $\sqrt{-1}$ is the imaginary unit. Let $d = n^9$ and $\lfloor \theta \rfloor_d = 2 \pi \lfloor d \theta / (2 \pi) \rfloor /d$. By Theorem \ref{wz17}(i), the inequality therein holds with $\max_{0 \le \theta \le 2 \pi} |S_n (\lfloor \theta \rfloor_d) | = \max_{j \le d} |S_n(2\pi j/d)|$. Noting that $\|\max_\theta |S_n (\lfloor \theta \rfloor_d) - S_n (\theta)| \|_q \le \|W_1\|_q / n^6$. Thus with elementary manipulations the same inequality in Theorem \ref{wz17}(i) holds with $\max_{0 \le \theta \le 2 \pi} |S_n (\theta) |$. 

Given $(W_t)_{t=1}^n$, let the sample covariance matrix 
\begin{equation*} \hat \Sigma_n = (\hat \gamma_{j-k}), \mbox{ where }
\hat \gamma_k = n^{-1} \sum_{l=k+1}^n W_l W_{l-k}, ~~0 \le k \le n-1,
\end{equation*}
Notice that the largest eigenvalues
\begin{equation*}
 \lambda_{\max} (\hat \Sigma_n) \le \max_{0 \le \theta \le 2 \pi} |S_n (\theta) |^2 / n.
\end{equation*}
We obtain the tail probability inequality
\begin{eqnarray*}
    \P\Big( \lambda_{\max} (\hat \Sigma_n)  \ge u\Big) 
    &\le& \P\Big( \max_{0 \le \theta \le 2 \pi} |S_n (\theta) | \ge (n u)^{1/2}\Big) \cr
    & \leq&  C_{q, \alpha} { {n (\log n)^{q/2} \| X_. \|_{q, \alpha}^q} \over (n u)^{q/2}} +
C_{q, \alpha} \exp\Big(- C_{q,\alpha} {u \over {\| X_. \|_{2, \alpha}^2 }}\Big),
\end{eqnarray*}
when $u \ge C_{q, \alpha} \| X_. \|_{2, \alpha} ^2 \log n$ for a sufficient large constant $C_{q, \alpha}$.
\end{example}

\subsection{Sample sum for random matrices with $d_n\geq 1$}

In this section we will consider the case of time dependent random matrices. Here $X_{t,n}\in \reals^{d_n\times d_n}$ is a $d_n$-dimensional random matrix and $\{X_{t,n}\}_{t\in\Z}$ is a matrix-valued time series. Tail probability inequalities for spectral norms for the sum $\sum_{t=1}^n X_{t, n}$ will be presented. The latter results are useful for statistical inference for convariance matrices of high dimensional time series.  



\cite{ahlswede2002strong}, \cite{oliveira2009concentration}, \cite{tropp2012user}, among many others, have studied such bounds when $\{X_{t,n}\}_{t\in\Z}$ are mutually independent. For instance, \cite{oliveira2009concentration} and \cite{tropp2012user} have introduced the following Bernstein-type inequality for tails. The result in \cite{oliveira2009concentration} also applies to martingales (cf. Freedman's Inequality for matrix martingales \citep{freedman1975tail}). 	Also see \cite{mackey2014matrix} for further extensions to conditionally independent sequences and combinatorial sums. 
\begin{thm}[Corollary 7.1 in \cite{oliveira2009concentration}, Theorem 1.4 in \cite{tropp2012user}]
Let $X_{1,n}, \dots, X_{n,n}$ be real, mean-zero, symmetric independent $d_n \times d_n$ random matrices and assume there exists a positive constant $M_n$ such that $\lambda_{\max}(X_{i,n}) \leq M_n$  for all $1 \leq i \leq n$. Then for any $x \geq 0$, 
\begin{align*}
\P\Big\{ \lambda_{\max}\Big( \sum_{i = 1}^n X_{i,n}\Big) \geq x \Big\} \leq d_n\exp\Big(-\frac{x^2}{2\sigma_n^2 + 2M_nx/3}\Big),
\end{align*}
where $\sigma_n^2 := \lambda_{\max}( \sum_{i = 1}^n \E X_{i,n}^2)$ and recall that $\lambda_{\max}(\cdot)$ represents the largest eigenvalue of the input.
\end{thm}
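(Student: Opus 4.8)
The plan is to run the matrix Laplace transform (matrix Chernoff) method, whose three ingredients are a tail-to-trace-MGF reduction, subadditivity of matrix cumulants via Lieb's concavity theorem, and a per-summand semidefinite bound on the matrix moment generating function. Write $Y_n := \sum_{i=1}^n X_{i,n}$. First I would reduce the one-sided tail of $\lambda_{\max}(Y_n)$ to a trace moment generating function: for any $\theta > 0$, since $e^{\theta \lambda_{\max}(Y_n)} = \lambda_{\max}(e^{\theta Y_n}) \leq \Tr e^{\theta Y_n}$ (the matrix $e^{\theta Y_n}$ being positive semidefinite), Markov's inequality gives
\[
\P\{\lambda_{\max}(Y_n) \geq x\} \leq e^{-\theta x}\, \E\, \Tr e^{\theta Y_n}.
\]

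The crucial second step is to control $\E\,\Tr\exp(\theta\sum_i X_{i,n})$ by decoupling the summands. Using independence together with Lieb's concavity theorem (the map $A \mapsto \Tr\exp(H + \log A)$ is concave on positive definite $A$), an induction on $i$ combined with Jensen's inequality yields the cumulant subadditivity bound
\[
\E\,\Tr\exp\Big(\theta\sum_{i=1}^n X_{i,n}\Big) \leq \Tr\exp\Big(\sum_{i=1}^n \log \E\, e^{\theta X_{i,n}}\Big).
\]
This is the technical heart of the argument and the step I expect to be the main obstacle: the naive factorization of $\E\,\Tr\exp$ over independent noncommuting summands is false, and replacing it requires the concavity input (or, alternatively, an iterated Golden--Thompson estimate, which is messier for $n$ terms) rather than elementary manipulation.

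Next I would bound each matrix cumulant, and here the one-sided hypothesis $\lambda_{\max}(X_{i,n}) \leq M_n$ enters through the scalar fact that $z \mapsto (e^{\theta z} - \theta z - 1)/z^2$ is nondecreasing on $\reals$, so for $z \leq M_n$ it is at most $g(\theta) := (e^{\theta M_n} - \theta M_n - 1)/M_n^2$, while a Taylor estimate gives $g(\theta) \leq (\theta^2/2)/(1 - \theta M_n/3)$ for $0 < \theta < 3/M_n$. Transferring this scalar inequality to the eigenvalues of $X_{i,n}$ and taking expectations (the linear term vanishing since $\E X_{i,n} = 0$) yields, in the semidefinite order, $\E\, e^{\theta X_{i,n}} \preceq I + g(\theta)\E X_{i,n}^2 \preceq \exp(g(\theta)\E X_{i,n}^2)$, hence $\log \E\, e^{\theta X_{i,n}} \preceq g(\theta)\E X_{i,n}^2$.

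Finally I would assemble the pieces. Monotonicity of the trace exponential under the semidefinite order, together with $\Tr \exp(A) \leq d_n\,\lambda_{\max}(\exp(A)) = d_n \exp(\lambda_{\max}(A))$ for symmetric $A$, gives
\[
\E\,\Tr e^{\theta Y_n} \leq \Tr\exp\Big(g(\theta)\sum_{i=1}^n \E X_{i,n}^2\Big) \leq d_n \exp\big(g(\theta)\,\sigma_n^2\big).
\]
Combining this with the first step and minimizing $-\theta x + g(\theta)\sigma_n^2$ over $\theta \in (0, 3/M_n)$ — the optimizer being $\theta = x/(\sigma_n^2 + M_n x/3)$ — is a routine scalar calculation that produces the stated exponent $x^2/(2\sigma_n^2 + 2 M_n x/3)$ and the prefactor $d_n$, completing the proof.
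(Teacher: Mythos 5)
The paper states this theorem without proof, quoting it verbatim from \cite{oliveira2009concentration} and \cite{tropp2012user}, so there is no internal argument to compare against; the relevant benchmark is the proof in the cited source. Your proposal is a correct reconstruction of exactly that argument (Tropp's): the matrix Laplace transform reduction, cumulant subadditivity via Lieb's concavity theorem, the semidefinite Bernstein bound $\log \E\, e^{\theta X_{i,n}} \preceq g(\theta)\,\E X_{i,n}^2$ using only the one-sided bound $\lambda_{\max}(X_{i,n}) \leq M_n$, and the choice $\theta = x/(\sigma_n^2 + M_n x/3)$ are precisely the ingredients of the standard proof, and each step as you state it is sound.
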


Assuming $\{X_{t,n}\}_{t\in\Z}$ satisfies a geometrically $\beta$-mixing decaying rate:
\begin{align}\label{eq:banna-mixing}
\beta(\{X_{t,n}\}_{t\in\Z};m) \leq \exp\{-\gamma_n(m-1)\},~~{\rm for~}m=1,2,\ldots
\end{align}
with some constant $\gamma_n>0$ possibly depending on $n$, \cite{banna2016bernstein} proved the following theorem that extends the matrix Bernstein inequality to the $\beta$-mixing case. In the sequel, for any set $A$, we denote $\card{(A)}$ to be its cardinality.

\begin{thm}[Theorem 1 in \cite{banna2016bernstein}]\label{thm:matrix1}
	Let $\lbrace X_{t,n}\rbrace_{t \in \Z}$ be a sequence of mean-zero symmetric $d_n \times d_n$ random matrices with $\sup_{i\in[n]}\lambda_{\max}(X_{i,n}) \leq M_n$ for some positive constant $M_n$. Further assume \eqref{eq:banna-mixing} holds. Then there exists a universal positive constant $C$ such that, for any $n\geq 2$ and $x>0$, 
\begin{align*}
\P\Big\{ \lambda_{\max}\Big( \sum_{i = 1}^n X_{i,n}\Big) \geq x \Big\} \leq d_n\exp\Big\{-\frac{Cx^2}{\nu_n^2n + M_n^2/\gamma_n + xM_n\tilde{\gamma}(\gamma_n,n)} \Big\},
\end{align*}
where
\begin{align*}
\nu^2 := \sup\limits_{K\subset[n]}\frac{1}{\card(K)}\lambda_{\max}\Big\{\E \Big(\sum_{i \in K} X_{i,n}\Big)^2\Big\}~~{\rm and}~~\tilde{\gamma}(\gamma_n,n) := \frac{\log n}{\log 2}\max\Big(2, \frac{32\log n}{\gamma_n\log 2}\Big).
\end{align*}
\end{thm}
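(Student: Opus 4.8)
The natural engine here is the matrix Laplace transform method: for any $t>0$,
\[
\P\Big\{\lambda_{\max}\Big(\sum_{i=1}^n X_{i,n}\Big)\geq x\Big\}\leq e^{-tx}\,\E\Tr\exp\Big(t\sum_{i=1}^n X_{i,n}\Big),
\]
so everything reduces to controlling the matrix moment generating function $\E\Tr\exp(tS_n)$ and optimizing over $t$ at the end. For independent summands, the master inequality of \cite{tropp2012user} factorizes this MGF and, together with the bound $\lambda_{\max}(X_{i,n})\leq M_n$, reproduces exactly the independent matrix Bernstein inequality quoted above (Corollary 7.1 in \cite{oliveira2009concentration}). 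The entire difficulty is that $\beta$-mixing dependence destroys the factorization, so the plan is to decouple the summands into nearly independent pieces at a cost measured by the $\beta$-mixing coefficient.

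First I would partition $[n]$ into consecutive big blocks separated by gaps of a common length $q$, with block sums $Y_j:=\sum_{i\in B_j}X_{i,n}$. The gap length is chosen of order $\log n/\gamma_n$ so that the mixing across a gap, $\exp\{-\gamma_n(q-1)\}$, is polynomially small in $n$. Using Berbee's coupling lemma applied recursively along a dyadic bisection of the block indices, I would replace the $Y_j$ by mutually independent copies $\tilde Y_j$ with identical marginal laws; each coupling succeeds except on an event of probability at most $\beta(\{X_{t,n}\}_{t\in\Z};q)$, and the bisection uses only $O(\log_2 n)$ couplings. This step is where the factor $\tilde{\gamma}(\gamma_n,n)$, of order $(\log n)^2/\gamma_n$, is born: it records both the total length discarded to gaps and the number of recursion levels.

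On the good event where all couplings succeed, $S_n$ equals a sum of the independent block matrices $\tilde Y_j$ plus the discarded gap terms. To the independent part I would apply the independent matrix Bernstein inequality; its variance parameter is controlled uniformly over blocks by
\[
\nu_n^2=\sup_{K\subset[n]}\frac{1}{\card(K)}\lambda_{\max}\Big\{\E\Big(\sum_{i\in K}X_{i,n}\Big)^2\Big\},
\]
which bounds $\lambda_{\max}(\E \tilde Y_j^2)$ by $\nu_n^2\,\card(B_j)$ and hence produces the $\nu_n^2 n$ term. The discarded gap indices contribute a sum of at most $\tilde{\gamma}(\gamma_n,n)$ matrices, each of spectral norm at most $M_n$, feeding the $xM_n\tilde{\gamma}(\gamma_n,n)$ term and, through their accumulated second moments, the $M_n^2/\gamma_n$ term; the coupling-failure events, having polynomially small total probability, are absorbed into the $d_n$ prefactor.

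The main obstacle is carrying out the decoupling at the level of the matrix Laplace transform rather than for scalars. One must show that replacing a dependent block by an independent copy perturbs $\E\Tr\exp(tS_n)$ by an amount controlled by the $\beta$-coefficient times an explicit function of $t$ and $M_n$, and crucially that these perturbations accumulate additively — not multiplicatively — across the $\log_2 n$ recursion levels; otherwise the error would blow up with $n$. Balancing the gap length $q$ against $\gamma_n$ to obtain precisely the stated $\tilde{\gamma}(\gamma_n,n)$, and then optimizing the free parameter $t$ in the Laplace bound, is the delicate bookkeeping that yields the universal constant $C$.
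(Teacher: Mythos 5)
The survey you are working from states this result without proof --- it is quoted from \cite{banna2016bernstein}, whose argument follows the scalar Bernstein inequality of \cite{merlevede2009bernstein} --- so your proposal must be judged against that proof. Your toolbox is the right one (matrix Laplace transform, blocking, Berbee coupling, Lieb/Tropp for independent summands), and you correctly flag the decisive issue of how coupling errors accumulate. But the construction you actually commit to cannot prove the theorem. With gaps of a \emph{common} length $q \sim \log n/\gamma_n$, each Berbee coupling fails with probability about $e^{-\gamma_n q}$, which is only polynomially small in $n$; moreover the number of couplings needed to render $N$ blocks mutually independent is $N-1$ (one per internal node of the bisection tree), not $O(\log_2 n)$. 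These failure probabilities enter your final bound \emph{additively}, so they cannot be ``absorbed into the $d_n$ prefactor,'' which is multiplicative. The theorem is asserted for all $x>0$: taking $x$ of order $nM_n$, its right-hand side is of order $d_n\exp\{-cn\gamma_n/(\log n)^2\}$, exponentially small in $n$, and any additive $n^{-c}$ error swamps it. The same arithmetic defeats the variant in which you perturb the Laplace transform with uniform gaps: a single top-level coupling perturbs $\E\Tr\exp(tS_n)$ by roughly $\beta(q)\,d_n e^{tnM_n}$, and for the values $t \sim \gamma_n/\{M_n(\log n)^2\}$ one must eventually take, $e^{tnM_n}$ overwhelms $e^{-\gamma_n q}$ for every $q = O(\log n/\gamma_n)$.

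The missing idea --- the heart of \cite{banna2016bernstein} --- is two-fold. First, the decoupling must be performed entirely at the level of the Laplace transform, because there an additive error competes with a main term that is at least $d_n$ (the trace of an exponential of a mean-zero-ish matrix), so polynomially small errors are harmlessly absorbed into a constant factor; the exponential smallness of the final tail then comes from the Chernoff factor $e^{-tx}$, not from the smallness of the coupling error. Second, for the per-coupling error $\beta(\mathrm{gap})\,d_n e^{tM_n\cdot(\mathrm{block\ size})}$ to be controllable at \emph{every} scale, the gap separating two sub-blocks must be proportional to the size of those sub-blocks: this is the Cantor-like dyadic decomposition of $[n]$, with block sizes and gaps decreasing geometrically over $O(\log n)$ levels, so that $e^{-\gamma_n\cdot(\mathrm{gap})}$ beats $e^{2tM_n\cdot(\mathrm{block\ size})}$ uniformly in the level once $t$ is suitably restricted. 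The independent copies produced by the couplings are then combined with the subadditivity of matrix cumulant generating functions for independent matrices (Lieb's theorem, as in \cite{tropp2012user}), the removed points form $O(\log n)$ smaller Cantor-type sets treated recursively and recombined by a trace H\"older-type inequality, and it is this multi-scale geometry and recombination --- not a count of discarded indices --- that generates the factor $\tilde\gamma(\gamma_n,n)$ and the restriction $t \lesssim 1/\{M_n\tilde\gamma(\gamma_n,n)\}$. In short: right tools, correctly identified obstacle, but the uniform-gap, good-event plan fails exactly at that obstacle, and proportional (multi-scale) gaps plus MGF-level decoupling are what repair it.
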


Later, this result is further extended to the $\tau$-mixing case, which was made in \cite{han2018moment}.

	\begin{thm}[Theorem 4.3 in \cite{han2018moment}]\label{thm:matrix2}
		Consider a sequence of real, mean-zero, symmetric $d_n \times d_n$ random matrices $\lbrace X_{t,n} \rbrace_{t\in \Z}$ with $\sup_{i\in[n]}\lVert X_{i,n} \rVert \leq M_n$ for some positive constant $M_n$ that is allowed to depend on $n$ and $\norm{\cdot}$ represents the matrix spectral norm. In addition, assume that this sequence is of a geometrically decaying $\tau$-mixing rate, i.e.,
		\[
		\tau(\{X_{t,n}\}_{t \in \Z};m, \norm{\cdot}) \leq M_n\psi_{1,n}\exp\{-\psi_{2,n}(m-1)\},~~{\rm for~}m=1,2,\ldots 
		\]
		with some constants $\psi_{1,n}, \psi_{2,n} > 0$. Denote $\tilde{\psi}_{1,n} := \max\{d_n^{-1}, \psi_{1,n}\}$. Then for any $ x\geq 0$ and any $n\geq 2$, we have 
		\begin{align*}
		\P\bigg\{\lambda_{\max}\bigg(\sum_{i = 1}^{n}X_{i,n}\bigg) \geq x\bigg\}
		\leq d_n \exp\bigg\{-\frac{x^2}{8(15^2n\nu_n^2 + 60^2M_n^2/\psi_{2,n}) + 2xM_n\tilde{\psi}(\tilde{\psi}_{1,n} ,\psi_{2,n},n,d_n)}\bigg\},
		\end{align*}
		where 
		\begin{align*}
		&\nu_n^2 := \sup_{K\subset [n]}\frac{1}{\card(K)}\lambda_{\max}\bigg\{\E\bigg(\sum_{i \in K}X_{i,n}\bigg)^2\bigg\}\\ 
		\mbox{and}~~~& \tilde{\psi}(\tilde{\psi}_{1,n} ,\psi_{2,n},n,d_n) := \frac{\log n}{\log 2} \max\bigg\{1,\frac{8\log(\tilde{\psi}_{1,n} n^6d_n)}{\psi_{2,n}}\bigg\}.
		\end{align*}
	\end{thm}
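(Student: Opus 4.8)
The plan is to combine the matrix Laplace-transform (Tropp) method with a multiscale coupling built on the $\tau$-mixing characterization of Theorem \ref{thm:dedecker}, mirroring the $\beta$-mixing argument behind Theorem \ref{thm:matrix1} but replacing the total-variation coupling native to $\beta$-dependence by the $L_1$/spectral-norm coupling native to $\tau$-dependence. \textbf{(Laplace transform.)} I would start from the exponential bound $\P\{\lambda_{\max}(S_n)\geq x\}\leq e^{-tx}\,\E\Tr e^{tS_n}$, valid for every $t>0$ since $\lambda_{\max}(e^{tS_n})\leq \Tr e^{tS_n}$. The entire task is then to upper bound $g(t):=\E\Tr e^{tS_n}$ by an expression of the form $d_n\exp\{(\text{variance})\,t^2+(\text{error})\,t\}$ and to optimize over admissible $t\in(0,1/(cM_n))$.

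\textbf{(Dyadic decoupling.)} Next I would recursively split $[n]$ into two halves, inserting a gap of width $q\asymp \psi_{2,n}^{-1}\log(\tilde\psi_{1,n}n^6 d_n)$ between consecutive blocks at every level. Writing $S_B$ for the partial sum over a block $B$ and conditioning on the $\sigma$-field generated by the left half, I would invoke the coupling half of Theorem \ref{thm:dedecker} to build a copy $\tilde S_{\mathrm{right}}$ of the right block that is independent of the left $\sigma$-field, identically distributed, and close in expected spectral norm. By the definition of $\tau(\{X_{t,n}\}_{t\in\Z};m,\norm{\cdot})$ together with the geometric decay hypothesis, the coupling error obeys $\E\norm{S_{\mathrm{right}}-\tilde S_{\mathrm{right}}}\lesssim (\text{block length})\cdot M_n\psi_{1,n}\exp\{-\psi_{2,n}(q-1)\}$, where the linear scaling in the block length comes from the $\ell^{-1}$ normalization built into the $\tau$-coefficient of a multivariate block.

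\textbf{(Perturbation of the trace exponential.)} To replace $S_{\mathrm{right}}$ by $\tilde S_{\mathrm{right}}$ inside $\Tr e^{t(\cdot)}$, I would use Duhamel's formula
\[
\Tr e^{t(A+Y)}-\Tr e^{t(A+\tilde Y)}=t\int_0^1 \Tr\Big\{(Y-\tilde Y)\,e^{t(A+\tilde Y+s(Y-\tilde Y))}\Big\}\,{\sf d}s,
\]
together with $|\Tr(CD)|\leq \norm{C}\Tr D$ for $D\succeq 0$ and the uniform bound $\norm{S_B}\leq(\text{length})M_n$, to control the replacement error by $t\,\E\norm{S_{\mathrm{right}}-\tilde S_{\mathrm{right}}}$ times the relevant trace exponential. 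Since $\tilde S_{\mathrm{right}}$ is independent of the left half, the leading term factorizes and the recursion continues. \textbf{(Aggregation and optimization.)} There are $\lceil\log_2 n\rceil$ levels; at each level the accumulated coupling error is rendered negligible by the choice of gap $q$, and this is precisely where the prefactor $\log n/\log 2$ and the logarithm $\log(\tilde\psi_{1,n}n^6 d_n)/\psi_{2,n}$ appearing in $\tilde\psi(\tilde\psi_{1,n},\psi_{2,n},n,d_n)$ originate, the $n^6 d_n$ resolution guaranteeing that the total error over all $\asymp n$ couplings stays below the $d_n$ prefactor. After full decoupling I would apply Tropp's master inequality (via Lieb concavity) to the resulting product of independent-block trace exponentials, extracting the variance proxy $n\nu_n^2+M_n^2/\psi_{2,n}$, with the $M_n^2/\psi_{2,n}$ term absorbing the gaps and the geometric tail of the cross-covariances. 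Feeding $g(t)\leq d_n\exp\{Ct^2(n\nu_n^2+M_n^2/\psi_{2,n})+\cdots\}$ into $e^{-tx}g(t)$ and optimizing yields the stated sub-exponential tail with the $xM_n\tilde\psi$ term in the denominator.

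The hard part will be the perturbation step: propagating the coupling error through the trace exponential while keeping the main term factorized. The difficulty is that $S_{\mathrm{right}}-\tilde S_{\mathrm{right}}$ sits inside the exponential, so a naive Duhamel estimate produces an uncontrolled factor $e^{t(\text{length})M_n}$ on long blocks. The remedy is to decouple at the carefully chosen scale $q$ so that $t\cdot(\text{length})M_n=O(1)$ over the admissible range of $t$, and to balance this against the $\log n$ levels of recursion — exactly the trade-off encoded in $\tilde\psi(\tilde\psi_{1,n},\psi_{2,n},n,d_n)$. Checking that the multiplicative errors accumulated across all scales do not degrade the $d_n$ prefactor beyond this logarithmic resolution is the crux of the argument.
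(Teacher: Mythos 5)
A point of order first: this survey does not actually prove Theorem~\ref{thm:matrix2} --- the statement is imported verbatim from Theorem~4.3 of \cite{han2018moment} --- so there is no in-paper proof to compare against; your proposal has to be judged against the argument in that source, which in turn adapts the $\beta$-mixing proof of \cite{banna2016bernstein} behind Theorem~\ref{thm:matrix1}. Measured against that argument, your plan follows essentially the same route and identifies the right ingredients: the matrix Laplace transform, blocking with gaps tuned to the geometric $\tau$-decay, replacement of Berbee's total-variation coupling by the Dedecker--Prieur coupling of Theorem~\ref{thm:dedecker}, a Duhamel-type Lipschitz perturbation of the trace exponential to pay for the coupling error (this is exactly how the expected-norm error native to $\tau$-dependence enters, in contrast to the probability-of-discrepancy error native to $\beta$-mixing), and Lieb/Tropp subadditivity applied to the decoupled blocks. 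Your remarks on the $\ell^{-1}$ normalization of the block $\tau$-coefficient, on the origin of $\log(\tilde{\psi}_{1,n}n^6d_n)/\psi_{2,n}$, and on the admissibility constraint on $t$ are all consistent with the source.

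There is, however, one concrete flaw in the combinatorics: ``recursively split $[n]$ into two halves, inserting a gap of width $q$ between consecutive blocks at every level'' cannot work as stated. At level $\ell$ of such a recursion there are $2^{\ell-1}$ gaps, so equal-width gaps of size $q$ over $\log_2 n$ levels remove on the order of $nq$ indices, which exceeds $n$ once $q>1$. The construction in \cite{banna2016bernstein} and \cite{han2018moment} is Cantor-like: the gap widths shrink geometrically across scales so that the retained set keeps cardinality comparable to $n$, and --- crucially --- the removed indices are not discarded but are themselves decomposed recursively into further Cantor-like sets, so that $[n]$ is partitioned into $O(\log n)$ such sets; the final tail bound is assembled by splitting the deviation level $x$ across these pieces. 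It is this bookkeeping over the partition, not merely the count of dyadic levels inside one decoupling, that produces the prefactor $\log n/\log 2$ in $\tilde{\psi}(\tilde{\psi}_{1,n},\psi_{2,n},n,d_n)$. With that correction, your outline matches the proof in the cited source.
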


We note that the above matrix Bernstein inequalities for weakly dependent data can be immediately applied to study the behavior of many statistics of importance in analyzing a high-dimensional time series model. In particular, tail behaviors for the largest eigenvalues of sample autocovariances in weakly dependent high dimensional time series models have been characterized in \citet[Theorems 2.1 and 2.2]{han2018moment}, with bounds delivered for both general and Gaussian weakly dependent time series (the later using a different set of techniques tailored for Gaussian processes) separately.


\subsection{U- and V-statistics}

Consider $\{X_{i,n}\}_{i\in[n]}$ to be $n$ random variables of identical distribution in a measurable space $(\cX,\cB_{\cX})$. Given a symmetric kernel function $h_n(\cdot):\cX^r \rightarrow \reals$, the U-and V-statistic $U_n(h_n)$ and $V_n(h_n)$ of order $r_n$ are defined as:
\begin{align*}
&U_n(h_n):=\binom{n}{r_n}^{-1}\sum_{1\leq i_1<\cdots<i_{r_n}\leq n}h_n(X_{i_1,n},\ldots,X_{i_{r_n},n})\\
{\rm and}~~~&V_n(h_n):= n^{-r_n}\sum_{i_1, \ldots, i_{r_n} =1}^n h_n(X_{i_1,n},\ldots, X_{i_{r_n},n}).
\end{align*}
The V- and U-statistics are popular alternatives to sample sums and have been routinely used in statistics nowadays (cf. the textbooks \cite{lee1990u} and \cite{korolyuk1994theory}).  

Non-asymptotic probability and moment inequalities for V- and U-statistics in the i.i.d. case have been extensively studied \citep{hoeffding1963probability,arcones1993limit,gine2000exponential,adamczak2006moment}. 
Assumed $\{X_{t,n}\}_{t\in\Z}$ to be geometrically $\phi$-mixing, \cite{han2018exponential} established the following theorem that gives an exponential inequality for dependent U-statistics. 

\begin{thm}
\label{thm:212}
[Theorem 2.1, \cite{han2018exponential}] Let $\{X_{t,n}\}_{t\in\Z}$ satisfies
\[
\phi(\{X_{t,n}\}_{t\in\Z}; m)\leq c_{n}\exp(-C_{n}m)~~{\rm for~}m=1,2,\ldots
\]
with two constants $c_{n},C_{n}>0$. Assume further that $\norm{h_n}_{\infty}\leq M_n$, symmetric, and is mean-zero (i.e., $\E h_n=0$ with regard to the product measure). We then have, there exist two constants $c_{n}', C_n'>0$ that only depend on $c_n, C_n,$ and $r_n$, such that, for any $x\geq 0$ and $n\geq 4$,
\begin{align*}
\P(|U_n(h_n)|\geq c_n'M_n/\sqrt{n}+x)\leq2\exp\Big(-\frac{C_n'x^2n}{M_n^2+M_nx(\log n)(\log\log 4n)}\Big).
\end{align*}
\end{thm}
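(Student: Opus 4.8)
The plan is to control the Laplace transform $\E\exp\{t\,U_n(h_n)\}$ and finish with a Chernoff bound, choosing $t$ optimally to recover the two-regime Bernstein tail. Throughout I abbreviate $h=h_n$, $r=r_n$, $M=M_n$ and set $k=\lfloor n/r\rfloor$. First I would separate the dependence-induced bias from the fluctuations. Because $\E h=0$ holds only under the product measure while the data are dependent, the true mean $\E U_n(h)=\binom{n}{r}^{-1}\sum_{i_1<\cdots<i_r}\E h(X_{i_1,n},\ldots,X_{i_r,n})$ is in general nonzero. Telescoping over the $r-1$ consecutive gaps and using the $\phi$-mixing expectation bound $|\E h(X_{i_1,n},\ldots,X_{i_r,n})|\le 2M\sum_{\ell=1}^{r-1}\phi_n(g_\ell)$, where $g_\ell$ denotes the $\ell$-th gap, together with the geometric decay $\phi_n(m)\le c_n e^{-C_n m}$, summing over all ordered tuples shows $|\E U_n(h)|\le c_n' M/\sqrt n$. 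Hence it suffices to establish the stated exponential bound for the centered statistic $U_n(h)-\E U_n(h)$ and to absorb $\E U_n(h)$ into the shift $c_n'M/\sqrt n$.

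Next I would invoke Hoeffding's averaging identity to convert the $r$-fold U-statistic into an average of sums. Writing $V_\pi=\tfrac1k\sum_{j=1}^{k}h(X_{\pi((j-1)r+1),n},\ldots,X_{\pi(jr),n})$, one has the exact combinatorial identity $U_n(h)=\tfrac{1}{n!}\sum_{\pi\in S_n}V_\pi$, whence $U_n(h)-\E U_n(h)=\tfrac1{n!}\sum_\pi (V_\pi-\E V_\pi)$. Convexity of $u\mapsto e^{tu}$ and Jensen's inequality then give $\E\exp\{t(U_n(h)-\E U_n(h))\}\le \tfrac{1}{n!}\sum_\pi \E\exp\{t(V_\pi-\E V_\pi)\}\le \max_\pi \E\exp\{t(V_\pi-\E V_\pi)\}$. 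This reduces the task to a single MGF estimate, uniform over $\pi$, for an average of $k$ bounded (by $M$) kernel evaluations.

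The core step is to bound $\log\E\exp\{t(V_\pi-\E V_\pi)\}$ uniformly in $\pi$ by a Bernstein-type expression. For a fixed $\pi$ the $k$ summands are bounded by $M$ and are measurable functionals of finitely many coordinates of the underlying $\phi$-mixing series. I would peel the blocks off one at a time, at each step using the $\phi$-mixing coefficient between the next block and the $\sigma$-field generated by the coordinates already conditioned on, so as to replace a conditional expectation by an unconditional one up to a geometrically controlled error; combined with a two-scale big-block/small-block decomposition that discards a negligible fraction of summands so that the retained blocks are separated by a growing time gap, this yields a bound of the shape $\log\E\exp\{t(V_\pi-\E V_\pi)\}\le C t^2(\sigma_n^2+\cdots)/\{k(1-c\,tM(\log n)(\log\log 4n))\}$, in the spirit of the $\phi$-mixing moment inequality of \cite{dedecker2005new} and the Merlev\'ede--Peligrad--Rio machinery behind Theorem \ref{lemma:sample_mean_Bern}. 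Here $\sigma_n^2$ is the summable-covariance variance proxy and the iterated-logarithm factor is the price of tuning the block length to beat the geometric rate $C_n$. Feeding this into $\P(U_n-\E U_n\ge x)\le \exp\{-tx+\log\E e^{t(U_n-\E U_n)}\}$, optimizing $t$ over its admissible range, repeating for $-U_n$, and combining with the Step-1 shift produces the claimed two-sided inequality with the constant $2$.

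The main obstacle is precisely this mixing step carried out under the permutation average. Hoeffding's trick removes the combinatorial $r$-fold structure, but it simultaneously scrambles the time order, so the summands inside a given $V_\pi$ are neither independent nor cleanly time-ordered; the blocking argument must therefore run uniformly over all $\pi$ and must dominate the worst-case configuration in which some blocks sit at small time separation, where $\phi_n$ is not small. Threading this through while extracting exactly the $(\log n)(\log\log 4n)$ correction --- sharper than the $(\log n)^2$ appearing in Theorem \ref{lemma:sample_mean_Bern} --- and keeping the variance proxy at order $r/n$ so that the leading sub-Gaussian regime reads $x^2 n/M^2$, is the delicate heart of the argument.
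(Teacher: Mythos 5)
The survey you are working from does not actually prove Theorem~\ref{thm:212} --- it is quoted verbatim from \cite{han2018exponential} --- so the only question is whether your route could work, and it cannot: the proposal defers the entire difficulty to a step that, as formulated, fails. Your Steps 1 and 2 are sound. The Yoshihara-type telescoping indeed gives $|\E U_n(h_n)|\le c_n'M_n/n$ (even better than the $M_n/\sqrt n$ you need) under geometric $\phi$-mixing, and Hoeffding's averaging identity plus Jensen correctly yield $\E\exp\{t(U_n-\E U_n)\}\le\max_{\pi}\E\exp\{t(V_\pi-\E V_\pi)\}$. The gap is the asserted uniform-in-$\pi$ Bernstein bound on $\log\E\exp\{t(V_\pi-\E V_\pi)\}$. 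The hypothesis controls only \emph{past-versus-future} $\sigma$-fields, while the maximum over $\pi$ includes permutations whose blocks interleave in time. Concretely, take $r_n=2$, $k=\lfloor n/2\rfloor$, and the permutation whose $j$-th block is $\{j,\,j+k\}$: every block then sits within time lag $1$ of neighboring blocks in \emph{both} halves of the time axis. Any peeling order must, at some step, compare the conditional law of the next block given the blocks already processed against its unconditional law at lag $1$, costing $\phi_n(1)$ --- a constant --- per peel; accumulated over $k$ peels this contributes a factor of order $e^{ck\phi_n(1)}$ to the MGF, which destroys the bound. The big-block/small-block repair also fails for this $\pi$: to achieve pairwise time separation $m$ among retained blocks you can keep at most a $1/m$ fraction of them, so with $m\asymp\log n$ (the smallest gap at which $\phi_n(m)$ becomes summably small) the discarded blocks contribute a deterministic error of order $M_n(1-1/\log n)$ to $V_\pi$, which is nowhere near $o(x)$. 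Nor is this a rare worst case that Jensen's average could wash out by splitting $S_n$ into good and bad permutations: under the uniform distribution a random permutation's blocks are spread over all of $[n]$, so time-clustered, mutually separated block structures form an exponentially small fraction, while the complementary set has weight $1-o(1)$ and cannot be absorbed via the trivial bound $e^{2tM_n}$. At bottom, dependence across interleaved index sets is governed by interlaced mixing coefficients (Bradley's $\phi^*$, $\rho^*$), which are \emph{not} dominated by the assumed past/future coefficients, so the ``uniform over $\pi$'' estimate is not obtainable from the stated hypotheses.

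A workable argument has to preserve the time order rather than symmetrize it away. One standard route: partition the original index set $[n]$ into alternating long and short blocks; since $\beta(\cA,\cB)\le\phi(\cA,\cB)$, geometric $\phi$-mixing gives geometric $\beta$-mixing, so Berbee-type coupling replaces the long blocks by independent copies at a probability cost proportional to $n$ times the mixing coefficient at the gap length; the resulting object is (up to errors) a U-statistic built on independent block variables, for which Hoeffding's averaging and classical Bernstein bounds are legitimate because all $V_\pi$ now satisfy the same MGF estimate; finally, the coupling error, the discarded short blocks, and cross-block remainder terms are what the shift $c_n'M_n/\sqrt n$ absorbs, and optimizing the block length against the geometric rate is what produces the $(\log n)(\log\log 4n)$ factor. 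Your proposal instead concentrates all of these difficulties into a single uniform MGF claim that the $\phi$-mixing assumption cannot support.
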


With tedious calculations, the dependence of $c_n',C_n'$ on $r_n, c_n,C_n$ in Theorem \ref{thm:212} can be explicitly obtained, as was made in Theorems \ref{thm:matrix1} and \ref{thm:matrix2}. 

In order to present the next result, let's first introduce more concepts in U- and V-statistics. For presentation clearness, let's assume the kernel $h_n(\cdot)$, its order $r_n$, and the dimension $d_n$ are fixed, and hence written as $h(\cdot)$, $r$, and $d$ without the subscript. Assume $\{X_{t,n}\}_{t\in\Z}$ to be stationary for any $n\in\N$. Let $\{\tilde{X}_{i,n}\}_{i\in[n]}$ be an i.i.d. sequence with $\tilde{X}_{1,n}$ identically distributed as $X_{1,n}$. The mean value of a symmetric kernel $h$ (with regard to the marginal probability measure $\P_n$) is defined as
\begin{align*}
\theta_n := \theta_n(h) := \E h(\tilde{X}_{1,n},\ldots,\tilde{X}_{r,n}).
\end{align*}
The kernel $h$ is called 
\emph{degenerate of level $k-1$} ($2 \leq k \leq r$) with regard to the measure $\P_n$ if 
\begin{align*}
\E h(x_1,\ldots,x_{k-1},\tilde{X}_{k,n},\ldots, \tilde{X}_{r,n}) = \theta_n
\end{align*}
for any $(x_1^\top,\ldots, x_{k-1}^\top)^\top\in\supp(\P_n^{k-1})$, the support of the product measure $\P_n^{k-1}$. 

When $h$ is degenerate of level $k - 1$, its Hoeffding decomposition takes the form
\begin{align*}
h(x_1,\ldots,x_r)  - \theta_n = \sum_{1\leq i_1<\ldots <i_k\leq r}h_{k,n}(x_{i_1},\ldots,x_{i_k}) + \ldots + h_{r,n}(x_1,\ldots,x_r),
\end{align*}
where $\{h_{p,n}\}_{p=k}^r$ are recursively defined as
\begin{equation*}
\begin{aligned}
&h_{1,n}(x) := g_{1,n}(x),\\
&h_{p,n}(x_1,\ldots,x_p) := g_{p,n}(x_1,\ldots,x_p) -\sum_{k=1}^p h_{1,n}(x_k)- \ldots - \sum_{1\leq k_1<\ldots <k_{p-1} \leq p}h_{p-1,n}(x_{k_1},\ldots,x_{k_{p-1}}),
\end{aligned}
\end{equation*}
for $p=2,\cdots,r$, with $\{g_{p,n}\}_{p=1}^r$ defined as $g_{r,n} := h - \theta_n$, and 
\begin{align*}
g_{p,n}(x_1,\ldots,x_p) := \E h(x_1,\ldots,x_p,\widetilde{X}_{p+1,n},\ldots,\widetilde{X}_{r,n}) - \theta_n
\end{align*}
for $1\leq p\leq r-1$. For each $1\leq p\leq r$, we denote the V-statistic generated by $h_{p,n}$ by
\begin{align*}
V_n(h_{p,n}) := n^{-p}\sum_{i_1,\ldots,i_p = 1}^n h_{p,n}(X_{i_1,n},\ldots,X_{i_p,n}).
\end{align*}


\begin{thm}[a slight modification to Theorem 1 in \cite{shen2019tail}]
Suppose $\{X_{i,n}\}_{i=1}^n$ is part of a stationary sequence $\{X_{t,n}\}_{t\in\Z}$ that 
is geometrically $\alpha$-mixing with coefficient
\begin{align*}
\alpha(\{X_{t,n}\}_{t\in\Z};m) \leq c_n\exp(-C_n m)~~~\text{for all }m\geq 1,
\end{align*}
where $c_n,C_n$ are two positive constants.  Suppose $h\in L_1(\R^{rd})$ is fixed, symmetric, continuous, and its Fourier transform $\widehat{h}(u):= \int h(x)e^{-2\pi iu^\top x}dx$ satisfies
\begin{align*}
\int_{\R^{rd}}\Big|\widehat{h}(u)\Big|\|u\|^qdu < \infty
\end{align*}
for some $q \geq 1$, where $\|\cdot\|$ represents the Euclidean norm. Then, there exists a positive constant $C_n'=C(r,c_n,C_n)$ such that for each $1\leq p \leq r$, and any $n\geq 2,$ $x > 0$, 
\begin{align*}
\P\Big(|V_n(h_{p,n})|\geq x\Big) \leq 6\exp\Big\{-\frac{C'_nnx^{2/p}}{A_{p,n}^{1/p} + x^{1/p}M_{p,n}^{1/p}}\Big\}
\end{align*}
with 
\begin{align*}
A_{p,n} = 2^{2r}\norm{\widehat{h}}_{L_1}^2\Big\{\frac{64c_n^{1/3}}{1-\exp(-C_n/3)}+\frac{(\log n)^4}{n}\Big\}^p~~{\rm and}~~M_{p,n} = 2^r\norm{\widehat{h}}_{L_1}(\log n)^{2p}.
\end{align*}
\end{thm}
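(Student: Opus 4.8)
The plan is to exploit the Fourier representation of the kernel so as to linearize both the Hoeffding projection and the $V$-statistic into an integral of \emph{product} kernels, each of which factorizes into normalized sample sums of bounded $\alpha$-mixing variables. First I would invoke Fourier inversion: since $h\in L_1(\R^{rd})$ one has $\hat h\in C_0$, whence $\norm{\hat h}_{L_1}<\infty$ follows from $h\in L_1$ together with $\int|\hat h(u)|\norm{u}^q\,du<\infty$ (the $q\geq1$ moment being a regularity device), and we may write $h(x)=\int_{\R^{rd}}\hat h(u)\,e^{2\pi i u^\top x}\,du$ with $u=(u_1,\dots,u_r)$, $x=(x_1,\dots,x_r)$ and, crucially, $e^{2\pi i u^\top x}=\prod_{j=1}^r e^{2\pi i u_j^\top x_j}$. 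Symmetrizing over the $r$ blocks (legitimate as $h$ is symmetric), the integrand is a symmetric product kernel.

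Because the Hoeffding projection $h\mapsto h_{p,n}$ and the map $h_{p,n}\mapsto V_n(h_{p,n})$ are linear, Fubini's theorem (justified by $\norm{\hat h}_{L_1}<\infty$) lets me move them inside the $u$-integral: $V_n(h_{p,n})=\int_{\R^{rd}}\hat h(u)\,V_n(\Phi_{u,p})\,du$, where $\Phi_{u,p}$ is the level-$p$ Hoeffding component of the exponential product kernel $\Phi_u(x)=\prod_j e^{2\pi i u_j^\top x_j}$. For a product kernel this component is explicit: writing $Z(v):=n^{-1}\sum_{i=1}^n\big(e^{2\pi i v^\top X_{i,n}}-\E e^{2\pi i v^\top X_{1,n}}\big)$ for the normalized centered sample sum at frequency $v$, the associated $V$-statistic factorizes as a sum over the $\binom{r}{p}$ choices of active blocks of products of $p$ such factors, so that $|V_n(\Phi_{u,p})|\leq 2^r\prod_{l=1}^p|Z(u_{j_l})|$ for an appropriate selection $j_1<\dots<j_p$. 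Each factor satisfies $|Z(v)|\leq2$, $\E Z(v)=0$, and is a sample mean of bounded functionals of the $\alpha$-mixing sequence.

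I would then pass to $L_s$-moments. By Minkowski's integral inequality and the generalized Hölder inequality, $\norm{V_n(h_{p,n})}_{L_s}\leq2^r\int|\hat h(u)|\prod_{l=1}^p\norm{Z(u_{j_l})}_{L_{ps}}\,du$, which reduces matters to one scalar estimate: a sharp $L_{ps}$-moment bound for $Z(v)$, uniform in $v$. Applying the $\alpha$-mixing Bernstein inequality of Theorem \ref{lemma:sample_mean_Bern} to the bounded variables $\cos(v^\top X_{i,n})$ and $\sin(v^\top X_{i,n})$ gives $\norm{Z(v)}_{L_m}\lesssim_{r} (a_0\,m/n)^{1/2}+(\log n)^2 m/n$, where the variance proxy $a_0$ is controlled through the $\alpha$-mixing covariance inequality by $\sum_{k\geq0}\alpha_n(k)^{1/3}\asymp c_n^{1/3}/(1-e^{-C_n/3})$ --- the source of the leading term of $A_{p,n}$ --- while the blocking/coupling underpinning Theorem \ref{lemma:sample_mean_Bern} produces the $(\log n)$ powers appearing in $M_{p,n}$ and the $(\log n)^4/n$ correction in $A_{p,n}$. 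Taking the product over $l$ and integrating out $\int|\hat h(u)|\,du=\norm{\hat h}_{L_1}$ yields $\norm{V_n(h_{p,n})}_{L_s}\lesssim 2^r\norm{\hat h}_{L_1}\big\{(a_0 s/n)^{1/2}+(\log n)^2 s/n\big\}^p$, a moment growth of the chaos-of-order-$p$ type.

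Finally I would convert this into the claimed tail. Optimizing $x^{-s}\norm{V_n(h_{p,n})}_{L_s}^s$ over integer $s$ turns the $s^{p/2}$ part of the growth into a Gaussian-like tail $\exp\{-c\,n x^{2/p}/A_{p,n}^{1/p}\}$ and the $s^{p}$ part into a sub-exponential tail $\exp\{-c\,n x^{1/p}/M_{p,n}^{1/p}\}$, and combining the two regimes gives precisely $6\exp\{-C_n' n x^{2/p}/(A_{p,n}^{1/p}+x^{1/p}M_{p,n}^{1/p})\}$; the $1/p$ exponents are exactly the inverses of the $p$-fold products, and the $(2^{2r}\norm{\hat h}_{L_1}^2)^{1/p}$ and $(2^r\norm{\hat h}_{L_1})^{1/p}$ factors inside $A_{p,n}^{1/p}$ and $M_{p,n}^{1/p}$ match the $2^r\norm{\hat h}_{L_1}$ prefactor raised to $2/p$ and $1/p$ respectively. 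The main obstacle is the single-frequency moment bound with correct constants: tracking the $c_n^{1/3}/(1-e^{-C_n/3})$ variance proxy through the covariance inequality and extracting the precise logarithmic powers from the coupling behind Theorem \ref{lemma:sample_mean_Bern}, \emph{uniformly in $v$} so that the estimate survives integration against $|\hat h|$. Controlling the Hoeffding combinatorics of the product kernel uniformly in $r$, so as to land on the clean $2^r$ and $2^{2r}$ constants, is a secondary bookkeeping task.
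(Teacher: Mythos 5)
Your proposal is correct and takes essentially the same route as the paper's source: the paper does not reprove this result but inherits it (with only the triangular-array, subscript-$n$ bookkeeping changed) from Theorem 1 of \cite{shen2019tail}, whose argument is exactly yours --- Fourier inversion of $h$, factorization of the degenerate Hoeffding components of the product kernels $\prod_j e^{2\pi i u_j^\top x_j}$ into products of centered sample means at each frequency, a Merlev\`ede--Peligrad--Rio-type Bernstein/moment bound for each such mean (the source of the $c_n^{1/3}/(1-\exp(-C_n/3))$ variance proxy and of the $\log n$ powers), followed by H\"older, Minkowski's integral inequality, and optimization of the moment index. Your accounting of how $A_{p,n}$, $M_{p,n}$, and the $2^{r}$, $2^{2r}$ prefactors arise is consistent with the stated constants.
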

As Theorem \ref{thm:212}, the dependence of $C'_n$ on $r,c_n,C_n$ in the above theorem could be explicitly calculated. We also note that, though $h(\cdot)$ itself is assumed to be fixed, the ``degenerate" kernels $h_{p,n}$ could depend on $n$ through the measure $\P_n$ in the triangular array setting, and hence the subscript $n$ is kept.

\section{A cautionary example}

Section \ref{sec:inequ} exemplifies the use of dependence measures to construct desired moment/probability inequalities for quantifying the statistical properties of procedures in a high-dimensional time series model. The problem then reduces to characterizing these dependence measures in a triangular array setting as highlighted at the beginning of Section \ref{sec:inequ}. As is apparent from reading their definitions, those dependence measures introduced in  \cite{doukhan1999new}, \cite{dedecker2004coupling}, and \cite{wu2005nonlinear} can be explicitly calculated. Therefore, the verification of those dependence measures, as were made in \citet[Section 3]{dedecker2007book}, \cite{wu2005nonlinear}, \cite{han2018moment}, and many other places, are obviously still valid under the high-dimensional triangular array framework. 

The verifications for the mixing conditions introduced at the beginning of Section \ref{sec:intro}, on the other hand, should be checked with caution under this new framework. In the following we will use the example of $\beta$-mixing to showcase this new challenge of high dimensionality in establishing mixing-type dependence for time series data. 

In literature, for a time series model that is fixed (i.e., not changing when more data points are observed), there have been a variety of results to establish bounds for $\beta$-mixing coefficients. See, for example, \cite{liebscher2005towards} for a review and \cite{chan2001chaos} for $\beta$-mixing of Markov processes. Let's focus on a particular example. Consider the following simple $d$-dimensional stationary Gaussian VAR(1) model:
\begin{align}\label{eq:var1}
X_t=\kappa X_{t-1}+E_t=\sum_{j=0}^{\infty}\kappa^{j}E_{t-j},  {\rm ~for~all~}t\in \Z.
\end{align}
Here the autocorrelation coefficient $\kappa\in\mathbb{R}$ is assumed to be fixed and satisfy $0<\kappa<1$ for simplicity, and the innovation noises $\{E_t\in\mathbb{R}^d\}$ are i.i.d. Gaussian. Then it is immediate (cf. Proposition 2 in \cite{liebscher2005towards}) that $\{X_t\}$ is geometrically $\beta$-mixing satisfying
\begin{align}\label{eq:fixed}
\beta(\{X_t\};m)\leq C\gamma^m
\end{align}
for some fixed constants $C>0, \gamma<1$.  

However, in high dimensions such a derivation is problematic. Let's fix the framework first. Adopting the triangular array setting as described in the last section, we assume that the studied model could change as more observations are available to us. In other words, let's adopt a parallel model to Equation \eqref{eq:var1}: for any $n=1,2,3,\ldots$, write
\begin{align}\label{eq:change}
X_{t,n}=
\begin{pmatrix}
X_{t,1,n}\\
X_{t,2,n}\\
\vdots\\
X_{t,d_n,n}
\end{pmatrix}
=\begin{pmatrix}
\kappa_n & 0& \ldots & 0\\
0 & \kappa_n& \ldots & 0\\
\vdots & \vdots & \ddots & 0\\
0 & 0 & 0 & \kappa_n
\end{pmatrix} \underbrace{\begin{pmatrix}
X_{t-1,1,n}\\
X_{t-1,2,n}\\
\vdots\\
X_{t-1,d_n,n}
\end{pmatrix}}_{X_{t-1,n}\in\reals^{d_n}}+\underbrace{\begin{pmatrix}
E_{t,1,n}\\
E_{t,2,n}\\
\vdots\\
E_{t,d_n,n}
\end{pmatrix}}_{E_{t,n}\in\reals^{d_n}}, {\rm ~for~all~}t\in \Z.
\end{align}
Here for any $n$, the observed data $\{X_{1,n}, X_{2,n}, \ldots,X_{n,n}\}$ are assumed to be generated from a process $\sum_{j=0}^{\infty}\kappa_n^{j}E_{t-j,n}$, where first of all the dimension of the time series $d_n$ has been allowed to change with the sample size $n$. Moreover, as an implicit consequence of the above high-dimensional triangular array framework, all the parameters in  Model \eqref{eq:change}, including $\kappa_n\in \mathbb{R}$, ${\rm Cov}(X_{t,n})\in \mathbb{R}^{d_n\times d_n}$, and ${\rm Cov}(E_{t,n})\in \mathbb{R}^{d_n\times d_n}$, are now allowed to change as the sample size $n$ is increasing.

Once such a framework is fixed, it becomes clear that the analysis of various dependence conditions has to be nonasymptotic, i.e., we now have to provide an analysis of the $\beta$-mixing coefficient that takes the change of $d_n, \kappa_n$, and all the other model parameters into account. 
With these concepts in mind, we first state a somehow comforting result that certain desirable properties could still be established for $\alpha$-mixing (in contrast to the $\beta$-mixing) coefficient under the triangular array setting.
\begin{thm}\label{thm:alpha}
Consider the following simple stationary Gaussian vector autoregressive model that generalizes \eqref{eq:change} by relaxing restrictions on the transition matrix:
\begin{align}\label{eq:VAR}
X_{t,n}=A_n X_{t,n-1}+E_{t,n},~~t\in\Z.
\end{align}
We then have
\begin{align*}
\alpha(\{X_{t,n}\}_{t\in \Z};m)\leq \Big\{\frac{\lambda_{\max}(\Sigma_n)}{\lambda_{\min}(\Sigma_n)}\Big\}^{1/2}\norm{A_n}^m,
\end{align*}
where $\Sigma_n:={\rm Cov}(X_{0,n})$, $\lambda_{\min}(\cdot)$ stands for the smallest eigenvalue of the input, and $\norm{\cdot}$ is the matrix spectral norm.
\end{thm}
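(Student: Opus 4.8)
The plan is to pass from the $\alpha$-coefficient to the maximal correlation coefficient $\rho$, exploit Gaussianity to reduce $\rho$ to a question about \emph{linear} functionals, and then read off the bound from the causal VAR(1) structure. First I would record the elementary and fully general inequality $\alpha(\cA,\cB)\le \rho(\cA,\cB)$: taking $f=\ind_A$ and $g=\ind_B$ in the definition of $\rho$ and using $\mathrm{Var}(\ind_A),\mathrm{Var}(\ind_B)\le 1/4$ gives $|\P(A\cap B)-\P(A)\P(B)|\le \tfrac14\rho(\cA,\cB)$, hence $\alpha\le\rho$ after taking suprema. By stationarity it then suffices to bound $\rho(\cA,\cB)$ for $\cA=\sigma(\{X_{t,n}\}_{t\le 0})$ and $\cB=\sigma(\{X_{t,n}\}_{t\ge m})$.

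The crucial input is the Kolmogorov--Rozanov theorem: for a jointly Gaussian family the maximal correlation over all square-integrable functionals coincides with the maximal correlation over linear functionals. Writing $H_{\mathrm{past}}$ and $H_{\mathrm{future}}$ for the $L^2$-closures of the linear spans of the coordinates of $\{X_{t,n}\}_{t\le 0}$ and $\{X_{t,n}\}_{t\ge m}$, and letting $P$ be the orthogonal projection onto $H_{\mathrm{past}}$, Cauchy--Schwarz (optimal $f\propto Pg$) turns this into $\rho=\sup_{g\in H_{\mathrm{future}}}\|Pg\|/\|g\|$. I would then invoke the causal representation $X_{t,n}=A_n^{t-m}X_{m,n}+\sum_{k=m+1}^{t}A_n^{t-k}E_{k,n}$ for $t\ge m$ to split $H_{\mathrm{future}}=H_1\oplus H_2$ orthogonally, where $H_1$ is the span of the coordinates of $X_{m,n}$ and $H_2$ the span of the innovations $\{E_{k,n}\}_{k>m}$. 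Since these innovations are independent of the past, $H_2\perp H_{\mathrm{past}}$, so $P$ annihilates the $H_2$-component and $\|Pg\|/\|g\|\le\|Pg_1\|/\|g_1\|$; this reduces everything to the maximal correlation between the past and the single vector $X_{m,n}$.

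For the final step I would use $X_{m,n}=A_n^m X_{0,n}+\xi$ with $\xi\perp H_{\mathrm{past}}$, so that for $g_1=c^\top X_{m,n}$ one has $Pg_1=(A_n^{\top m}c)^\top X_{0,n}$ and the ratio becomes the Rayleigh quotient $(c^\top A_n^m\Sigma_n A_n^{\top m}c)/(c^\top\Sigma_n c)$, where $\mathrm{Cov}(X_{m,n})=\mathrm{Cov}(X_{0,n})=\Sigma_n$ by stationarity. Bounding the numerator by $\lambda_{\max}(\Sigma_n)\|A_n^{\top m}c\|^2\le\lambda_{\max}(\Sigma_n)\|A_n\|^{2m}\|c\|^2$ and the denominator below by $\lambda_{\min}(\Sigma_n)\|c\|^2$ yields $\rho\le\{\lambda_{\max}(\Sigma_n)/\lambda_{\min}(\Sigma_n)\}^{1/2}\|A_n\|^m$, which with $\alpha\le\rho$ is the claim (the inequality is vacuous, hence trivially true, when $\Sigma_n$ is singular). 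I expect the main obstacle to be the justification/citation of the Gaussian linearization, since everything after it is routine linear algebra; some care is also needed to verify the orthogonal splitting $H_{\mathrm{future}}=H_1\oplus H_2$ rigorously in the infinite-dimensional $L^2$ setting.
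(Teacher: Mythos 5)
Your proof is correct, and it rests on the same two pillars as the paper's—the inequality $\alpha(\cA,\cB)\le\rho(\cA,\cB)$ and the Kolmogorov--Rozanov linearization of maximal correlation for Gaussian families, followed by the same Rayleigh-quotient/Cauchy--Schwarz computation giving $\{\lambda_{\max}(\Sigma_n)/\lambda_{\min}(\Sigma_n)\}^{1/2}\|A_n\|^m$—but it differs genuinely in how the infinite past and future are reduced to the finite-dimensional pair $(X_{0,n},X_{m,n})$. The paper invokes the Markov property of the VAR(1) chain, citing the known fact that for Markov chains $\rho\{\sigma(X_{-\infty}^{0}),\sigma(X_{m}^{\infty})\}=\rho\{\sigma(X_0),\sigma(X_m)\}$, so it only needs the Kolmogorov--Rozanov theorem for the finite collection $(X_0,X_m)$. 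You instead apply the Gaussian linearization to the full past/future $\sigma$-fields and replace the Markov-chain lemma with an explicit Hilbert-space argument: the orthogonal splitting $H_{\mathrm{future}}=H_1\oplus H_2$ (span of the coordinates of $X_{m,n}$ plus span of the post-$m$ innovations), with $H_2\perp H_{\mathrm{past}}$ by causality, so the projection onto the past annihilates $H_2$ and the supremum collapses onto $H_1$. What your route buys: it is self-contained at that step (no citation of the $\rho$-mixing reduction for Markov chains) and it visibly extends to any process with a causal linear representation, Markov or not. What it costs: you need Kolmogorov--Rozanov in its infinite-family form (this is its original setting for stationary Gaussian processes, so it is available, but it must be cited in that form), plus the closure and orthogonal-sum verifications you flagged—note that $H_1+H_2$ is closed because $H_1$ is finite-dimensional and orthogonal to the closed subspace $H_2$, so this gap is easily filled. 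Both arguments use stationarity identically, and both implicitly require $\Sigma_n$ to be nonsingular, which, as you correctly observe, is harmless since the claimed bound is vacuous otherwise.
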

\begin{proof}
For notation simplicity, let's remove $n$ from the subscript. Since VAR(1) is a stationary Markov chain, by \cite{bradley2005basic}, we have the $\rho$-mixing coefficient
	\begin{align*}
	\rho\{\sigma(X_{-\infty}^{0}), \sigma(X_{m}^{\infty})\} = \rho\{\sigma(X_0), \sigma(X_m)\}.
	\end{align*}
	By Theorem 1 from \cite{kolmogorov1960strong}, if $U_1,\ U_2,\ \dots,\ U_m,\ V_1,\ V_2,\ \dots,\ V_\ell$ are jointly normal random variables, then there exist real numbers $a_1,\ a_2,\ \dots,\ a_m,\ b_1,\ b_2,\ \dots,\ b_\ell$ such that
	\begin{align*}
	\rho\{\sigma(U_k, 1 \leq k \leq m), \sigma(V_k, 1 \leq k \leq \ell)\} = \mbox{Corr}\Big(\sum_{k = 1}^m a_kU_k, \sum_{k = 1}^\ell b_k V_k\Big).
	\end{align*} 
	Since $(X_0, X_m)$ is multivariate normal, there exist real numbers $a = (a_1, a_2, \dots, a_p)^\top, b=(b_1, b_2, \dots, b_p)^\top$ such that
	\begin{align*}
	&\rho\{\sigma(X_0), \sigma(X_m)\}= \mbox{Corr} (a^\top X_0, b^\top X_m)= \frac{a^\top \Sigma (A^m)^\top b}{\sqrt{a^\top\Sigma a b^\top \Sigma b}}
	\leq \sqrt{\lVert \Sigma^{\frac{1}{2}}(A^m)^\top  \Sigma^{-1}(A^m) \Sigma^{\frac{1}{2}}\rVert},
	\end{align*}
	where the last inequality is followed by Cauchy-Schwarz. Hence we have
	\begin{align*}
	&\rho\{\sigma(X_{-\infty}^0), \sigma(X_{m}^{\infty})\}\leq \Big\{\frac{\lambda_{\max}(\Sigma)}{\lambda_{\min}(\Sigma)}\Big\}^{\frac{1}{2}}\lVert A\rVert^m.
	\end{align*}
	Now noticing
	\begin{align*}
	\alpha\{\sigma(X_{-\infty}^0), \sigma(X_{m}^{\infty})\} \leq \rho\{\sigma(X_{-\infty}^0), \sigma(X_{m}^{\infty})\}
	\end{align*}
	finishes the proof.
\end{proof}

Applying Theorem \ref{thm:alpha} to Model \eqref{eq:change}, it is clear that the $\alpha$-mixing coefficient for Model \eqref{eq:change} is bounded by $\kappa_n^m$, which will be exponentially tending to 1 if $\kappa_n<1$ is fixed, regardless of how large the dimension $d_n$ is.  We then state a possibly striking result, that, even if restricting to the Model \eqref{eq:change} and fixing $\kappa_n$, the $\beta$-mixing coefficient of the time series $\{X_{t,n}\}_{t\in \Z}$ could still be tending to 1 if $d_n$ is sufficiently larger than the time gap. Thusly, $\beta$-mixing coefficient is dimension-dependent.

\begin{thm}\label{thm:key}
Consider the model \eqref{eq:change} under the triangular array setting. If we further assume that $E_{0,n}=(E_{0,1,n},\ldots,E_{0,d_n,n})^\top$ have i.i.d. components, then for any positive integers $n$ and $m$, we have 
\[
\beta(\{X_{t,n}\}_{t\in\Z}; m)\geq 1-2\exp\Big(-\frac{d_n\kappa_n^{2m}}{18\pi^2} \Big).
\]
In particular, if (1) $d_n=d$ is not changing with $n$ but $\lim_{n\to\infty}\kappa_n^{2n}>\frac{18\pi^2\log 2}{d}$, or (2) $\kappa_n=\kappa$ is not changing with $n$ but $\lim_{n\to\infty} d_n\kappa^{2n}>18\pi^2\log 2$, then $\liminf_{n\to\infty}\beta(\{X_{t,n}\}_{t\in\Z}; n) > 0$.
\end{thm}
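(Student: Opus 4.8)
The plan is to bound the $\beta$-coefficient from below by passing to a two-time-slice marginal and then exploiting the independence across the $d_n$ coordinates. By stationarity I fix the splitting time at $0$, so it suffices to lower bound $\beta(\sigma(X_{t,n}:t\le 0),\sigma(X_{t,n}:t\ge m))$. Since $\beta$ is monotone with respect to the generating $\sigma$-fields, and $X_{0,n}\in\sigma(X_{t,n}:t\le 0)$ while $X_{m,n}\in\sigma(X_{t,n}:t\ge m)$, this is at least $\beta(\sigma(X_{0,n}),\sigma(X_{m,n}))$, which by the classical identification of the $\beta$-coefficient equals
\[
\TV\big(\mathcal{L}(X_{0,n},X_{m,n}),\,\mathcal{L}(X_{0,n})\otimes\mathcal{L}(X_{m,n})\big).
\]
Because the innovations have i.i.d.\ components, the $d_n$ scalar chains $X_{t,k,n}=\kappa_n X_{t-1,k,n}+E_{t,k,n}$ are mutually independent, so both laws are product measures over $k\in[d_n]$: the $k$-th factor of the joint law is a centered bivariate Gaussian $P_k$ with correlation $\rho:=\kappa_n^{m}$ (the lag-$m$ autocorrelation of a stationary AR(1)), while the $k$-th factor of the product law is the same bivariate Gaussian but with correlation $0$, call it $Q_k$.

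The reduction turns the claim into a lower bound on $\TV(\bigotimes_k P_k,\bigotimes_k Q_k)$. I would prove this with a one-dimensional sign-agreement statistic $N:=\sum_{k=1}^{d_n}\ind\{X_{0,k,n}X_{m,k,n}>0\}$, which aggregates the weak per-coordinate dependence into a detectable global signal. By the arcsine law for orthant probabilities of a bivariate normal, each summand is Bernoulli with success probability $\tfrac12+\tfrac{\arcsin\rho}{\pi}$ under $P_k$ and exactly $\tfrac12$ under $Q_k$; hence $N\sim\mathrm{Bin}(d_n,\tfrac12)$ under the product law, whereas under the joint law $N$ has mean $d_n(\tfrac12+\tfrac{\arcsin\rho}{\pi})$, a gap of $\Delta:=d_n\arcsin(\rho)/\pi\ge d_n\rho/\pi$ using $\arcsin\rho\ge\rho$. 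For the event $A=\{N\ge\tau\}$ with $\tau$ placed between the two means, and because the summands are bounded indicators, Hoeffding's inequality controls both the type-I and type-II errors exponentially in $\Delta^2/d_n\gtrsim d_n\rho^2=d_n\kappa_n^{2m}$. Evaluating $\TV\ge\P(A)-Q(A)$ then yields $1-2\exp(-d_n\kappa_n^{2m}/(18\pi^2))$, where the precise constant $18\pi^2$ emerges from the specific threshold choice in the concentration step.

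The two ``in particular'' assertions follow by setting $m=n$ in the main bound and taking limits: under either hypothesis the quantity $d_n\kappa_n^{2n}$ exceeds $18\pi^2\log 2$ in the limit, so $2\exp(-d_n\kappa_n^{2n}/(18\pi^2))$ stays strictly below $1$, whence $\liminf_n\beta(\{X_{t,n}\}_{t\in\Z};n)>0$.

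I expect the main obstacle to be conceptual rather than computational. One must recognize that, unlike the $\alpha$-coefficient (bounded by $\kappa_n^m$ via Theorem \ref{thm:alpha} irrespective of $d_n$), the $\beta$-coefficient \emph{saturates}, because total variation between high-dimensional product measures amplifies even vanishingly weak per-coordinate dependence. Technically, the crux is choosing a summary statistic that simultaneously (a) exhibits an explicit, dimension-linear mean gap between the dependent and independent laws---the arcsine identity is what delivers the clean $\arcsin(\rho)/\pi$---and (b) concentrates sharply under \emph{both} measures. Using bounded sign indicators rather than the bilinear form $\sum_k X_{0,k,n}X_{m,k,n}$ is precisely what makes Hoeffding directly applicable and sidesteps the heavier-tailed analysis that products of Gaussians would otherwise require.
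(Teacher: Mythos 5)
Your proof is correct, and its skeleton matches the paper's: both arguments reduce to the two-time-slice coefficient $\beta(\sigma(X_{0,n}),\sigma(X_{m,n}))$, identify it with the total variation distance between the joint law and the product of the marginals, exploit the coordinate-wise independence to view both laws as $d_n$-fold products, and then aggregate per-coordinate binary signals via Hoeffding's inequality under each of the two measures. Where you genuinely differ is in the key per-coordinate step and in the reduction. The paper never exhibits explicit events: it first lower-bounds the \emph{scalar} $\alpha$-mixing coefficient, $\alpha(\sigma(X_{0,1,n}),\sigma(X_{m,1,n}))\ge \kappa_n^m/(2\pi)$, by combining Kolmogorov's identity $\rho(\sigma(Z_1),\sigma(Z_2))=|\mathrm{Corr}(Z_1,Z_2)|$ for jointly Gaussian pairs with the comparison $\rho\le 2\pi\alpha$, and then extracts near-optimal sets $G,H$ whose covariance gap is $\eta=\kappa_n^m/(3\pi)$; Hoeffding at deviation $\eta/2$ is exactly what produces the constant $18\pi^2=2\cdot(3\pi)^2$. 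You instead use the sign-agreement indicators $\ind\{X_{0,k,n}X_{m,k,n}>0\}$ and Sheppard's arcsine formula, which gives the explicit per-coordinate gap $\arcsin(\kappa_n^m)/\pi\ge\kappa_n^m/\pi$ and, after Hoeffding, the sharper exponent $d_n\kappa_n^{2m}/(2\pi^2)$; the stated bound with $18\pi^2$ then follows a fortiori. (So your closing remark that $18\pi^2$ ``emerges from the threshold choice'' is not quite right---in your argument that constant never arises, it is simply dominated---but this costs nothing.) For the reduction you invoke only monotonicity of $\beta$ in its two $\sigma$-field arguments, which suffices for a lower bound and is simpler than the paper's appeal to the Markov property (Theorem 7.3 in Bradley's book), which yields an exact identity. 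The trade-off between the two routes: yours is more elementary and constructive and yields a better constant, while the paper's isolates the only place where Gaussianity enters (a lower bound on the marginal $\alpha$-mixing coefficient), so it transfers verbatim to any coordinate-wise i.i.d.\ model whose scalar marginals admit such an $\alpha$-mixing lower bound, without requiring an orthant-probability formula.
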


Theorem \ref{thm:key} is concerning a particularly simple model that is merely aggregating $d_n$ i.i.d. AR(1) Gaussian sequences once we have $n$ data points. It is very unlikely that any assumption in a general theorem for quantifying the behavior of a high-dimensional time series could exclude such a simple case. However,  it has been apparent from this result that, once the triangular array framework is adopted, many simple and elegant properties like Equation \eqref{eq:fixed} could no longer be trusted because otherwise, the case that $\lim_{n\to\infty}\beta(\{X_{t,n}\}_{t\in\Z}; n)\ne 0$ shall never happen. The reason is, once the model $\{X_{t,n}\}$ is allowed to change with $n$, the values of $C$ and $\gamma$ in \eqref{eq:fixed} will depend on the sample size $n$. Any solid analysis of the $\beta$-mixing coefficient hence has to be fully nonasymptotic. This, however, violates the spirit beneath the definition of various mixing coefficients, and to the authors' knowledge, cannot be trivially handled (except for the $\alpha$- and $\rho$-mixing coefficients under a Gaussian process, as showcased above in Theorem \ref{thm:alpha}).

\begin{proof}[Proof of Theorem \ref{thm:key}]
The proof is nonasymptotic and relies on several known results in the mixing literature. For presentation clearness, we omit the subscript $n$ in the following when no confusion is made.

In the first step, we need to establish a lower bound for the marginal $\alpha$-mixing coefficient $\alpha(\sigma(X_{0,1}), \sigma(X_{m,1}))$ with the understanding that
\[
X_{t}=(X_{t,1},\ldots,X_{t,d})^\top.
\]
For any bivariate Gaussian random vector $(Z_1,Z_2)^\top\in\mathbb{R}^2$, the following two facts are known. 
\begin{itemize}
\item[(1)] One has
\[
\alpha(\sigma(Z_1),\sigma(Z_2))\leq \rho(\sigma(Z_1),\sigma(Z_2))\leq 2\pi \alpha(\sigma(Z_1),\sigma(Z_2)).
\]
See, for example, Equation (1.9) in \citet[Chapter 4]{ibragimov2012gaussian} or Theorem 2 in \cite{kolmogorov1960strong}.
\item[(2)] Theorem 1 in \cite{kolmogorov1960strong} gives
\[
\rho(\sigma(Z_1),\sigma(Z_2))=|{\rm Corr}(Z_1,Z_2)|.
\]
\end{itemize}
The above two results then yield
\begin{align*}
\alpha(\sigma(X_{0,1}), \sigma(X_{m,1}))&\geq \frac{1}{2\pi}|{\rm Corr}(X_{0,1},X_{0,m})|=\frac{\kappa^m}{2\pi}.
\end{align*}

In the second step, we are going to establish a lower bound on $\beta(\sigma(X_{0}), \sigma(X_{m}))$ based on the derived lower bound for the marginal $\alpha$-mixing coefficient. For any $j\in [d]$, since
\[
\alpha(\sigma(X_{0,j}), \sigma(X_{m,j}))\geq \frac{\kappa^m}{2\pi},
\]
by definition, there must exist sets $G\in \sigma(X_{0,j})$ and $H \in \sigma(X_{m,j})$ such that
\[
\Big|\P\Big(X_{0,j}\in G, X_{m,j}\in H\Big) - \P\Big(X_{0,j}\in G\Big)\P\Big(X_{m,j}\in H\Big)\Big| \geq \frac{\kappa^m}{3\pi}=:\eta.
\]
Without loss of generality, we may assume that 
\[
\theta:=\P(X_{0,j}\in G_n, X_{m,j}\in H)>\P(X_{0,j}\in G)\P(X_{m,j}\in H)=:\xi.
\]
For $j\in[d]$, let's define $V_j, W_j$ as
\[
V_j=\ind(X_{0,j}\in G)~~~{\rm and}~~~W_j=\ind(X_{m,j}\in H),
\]
where $\ind(\cdot)$ represents the indicator function. Then we have 
\begin{itemize}
\item[(1)] For each $j\in [d]$, $\E V_j=\P(X_{0,j}\in G)$, $\E W_j=\P(X_{m,j}\in H)$, $\E (V_jW_j)=\theta$.
\item[(2)] By i.i.d.-ness of $E_{0,1},\ldots,E_{0,d}$, $\{(V_j, W_j),j\in[d]\}$ is an i.i.d. sequence.
\end{itemize}

Let's now consider the following event
\begin{align}\label{eq:event1}
\Big\{\frac{1}{d}\sum_{j=1}^{d}V_jW_j \geq \theta- \frac{\eta}{2} \Big\}.
\end{align}
By Hoeffding's inequality for i.i.d. data \citep{hoeffding1963probability}, we have 
\[
\P\Big(\frac{1}{d}\sum_{j=1}^{d}V_jW_j \geq \theta- \frac{\eta}{2}\Big) \geq 1-\exp(-d\eta^2/2).
\]

On the other hand, consider a comparable event to \eqref{eq:event1} under the product measure:
\[
\Big\{\frac{1}{d}\sum_{j=1}^{d}V_j\tilde W_j \geq \theta- \frac{\eta}{2} \Big\},
\]
where $\{\tilde W_j\}_{j\in[d]}$ is a copy of $\{W_j\}_{j\in[d]}$ and is independent of $\{V_j\}_{j\in[d]}$. Again, by Hoeffding's inequality, we have 
\[
\P\Big(\frac{1}{d}\sum_{j=1}^{d}V_j\tilde W_j \geq \theta- \frac{\eta}{2} \Big)\leq \P\Big(\frac{1}{d}\sum_{j=1}^{d}V_j\tilde W_j \geq \xi+ \frac{\eta}{2} \Big)\leq \exp(-d\eta^2/2)
\]
as $\theta-\xi\geq \eta$.

By definition of $\beta$-mixing coefficient, we then have
\begin{align*}
\beta(\sigma(X_{0}), \sigma(X_{m}))&\geq \Big|\P\Big(\frac{1}{d}\sum_{j=1}^{d}V_jW_j \geq \theta- \frac{\eta}{2}\Big)- \P\Big(\frac{1}{d}\sum_{j=1}^{d}V_j\tilde W_j \geq \theta- \frac{\eta}{4} \Big)\Big|\\
&\geq 1-2\exp\Big(-\frac{d\kappa^{2m}}{18\pi^2} \Big).
\end{align*}

Lastly, by noticing that the model studied is naturally a Markov chain and by using Theorem 7.3 in \cite{bradley2007}, one obtains
\[
\beta(\{X_{t}\};m)=\beta(\sigma(X_{0}), \sigma(X_{m})),
\]
which finishes the proof.
\end{proof}

\bibliographystyle{apalike}
\bibliography{mix}

\begin{thebibliography}{}

\bibitem[Adamczak, 2006]{adamczak2006moment}
Adamczak, R. (2006).
\newblock Moment inequalities for {U}-statistics.
\newblock {\em The Annals of Probability}, 34(6):2288--2314.

\bibitem[Ahlswede and Winter, 2002]{ahlswede2002strong}
Ahlswede, R. and Winter, A. (2002).
\newblock Strong converse for identification via quantum channels.
\newblock {\em IEEE Transactions on Information Theory}, 48(3):569--579.

\bibitem[Arcones and Gine, 1993]{arcones1993limit}
Arcones, M.~A. and Gine, E. (1993).
\newblock Limit theorems for {U}-processes.
\newblock {\em The Annals of Probability}, 21(3):1494--1542.

\bibitem[Banna et~al., 2016]{banna2016bernstein}
Banna, M., Merlev{\`e}de, F., and Youssef, P. (2016).
\newblock Bernstein-type inequality for a class of dependent random matrices.
\newblock {\em Random Matrices: Theory and Applications}, 5(2):1650006.

\bibitem[Bickel and B{\"u}hlmann, 1999]{bickel1999new}
Bickel, P.~J. and B{\"u}hlmann, P. (1999).
\newblock A new mixing notion and functional central limit theorems for a sieve
  bootstrap in time series.
\newblock {\em Bernoulli}, 5(3):413--446.

\bibitem[Bradley, 2005]{bradley2005basic}
Bradley, R.~C. (2005).
\newblock Basic properties of strong mixing conditions. {A} survey and some
  open questions.
\newblock {\em Probability Surveys}, 2(2):107--144.

\bibitem[Bradley, 2007]{bradley2007}
Bradley, R.~C. (2007).
\newblock {\em Introduction to Strong Mixing Conditions}, volume~1.
\newblock Kendrick Press.

\bibitem[Chan and Tong, 2001]{chan2001chaos}
Chan, K.-S. and Tong, H. (2001).
\newblock {\em Chaos: a Statistical Perspective}.
\newblock Springer.

\bibitem[Dedecker et~al., 2007]{dedecker2007book}
Dedecker, J., Doukhan, P., Lang, G., Leon, J., Louhichi, S., and Prieur, C.
  (2007).
\newblock {\em Weak Dependence: With Examples and Applications}.
\newblock Springer-Verlag New York.

\bibitem[Dedecker and Prieur, 2004]{dedecker2004coupling}
Dedecker, J. and Prieur, C. (2004).
\newblock Coupling for $\tau$-dependent sequences and applications.
\newblock {\em Journal of Theoretical Probability}, 17(4):861--885.

\bibitem[Dedecker and Prieur, 2005]{dedecker2005new}
Dedecker, J. and Prieur, C. (2005).
\newblock New dependence coefficients. examples and applications to statistics.
\newblock {\em Probability Theory and Related Fields}, 132(2):203--236.

\bibitem[Doukhan, 1994]{doukhan1994mixing}
Doukhan, P. (1994).
\newblock {\em Mixing: Properties and Examples}.
\newblock Springer.

\bibitem[Doukhan and Louhichi, 1999]{doukhan1999new}
Doukhan, P. and Louhichi, S. (1999).
\newblock A new weak dependence condition and applications to moment
  inequalities.
\newblock {\em Stochastic Processes and their Applications}, 84(2):313--342.

\bibitem[Doukhan and Neumann, 2007]{Doukhan2007}
Doukhan, P. and Neumann, M.~H. (2007).
\newblock {Probability and moment inequalities for sums of weakly dependent
  random variables, with applications}.
\newblock {\em Stochastic Processes and their Applications}, 117(7):878--903.

\bibitem[Freedman, 1975]{freedman1975tail}
Freedman, D.~A. (1975).
\newblock On tail probabilities for martingales.
\newblock {\em Annals of Probability}, 3(1):100--118.

\bibitem[Gin{\'e} et~al., 2000]{gine2000exponential}
Gin{\'e}, E., Lata{\l}a, R., and Zinn, J. (2000).
\newblock Exponential and moment inequalities for $\uppercase{U}$-statistics.
\newblock In {\em High Dimensional Probability II}, volume~47, pages 13--38.
  Springer.

\bibitem[Greenshtein and Ritov, 2004]{greenshtein2004persistence}
Greenshtein, E. and Ritov, Y. (2004).
\newblock Persistence in high-dimensional linear predictor selection and the
  virtue of overparametrization.
\newblock {\em Bernoulli}, 10(6):971--988.

\bibitem[Han, 2018]{han2018exponential}
Han, F. (2018).
\newblock An exponential inequality for {U}-statistics under mixing conditions.
\newblock {\em Journal of Theoretical Probability}, 31(1):556--578.

\bibitem[Han and Li, 2019]{han2018moment}
Han, F. and Li, Y. (2019+).
\newblock Moment bounds for large autocovariance matrices under dependence.
\newblock {\em Journal of Theoretical Probability (in press)}.

\bibitem[Hoeffding, 1963]{hoeffding1963probability}
Hoeffding, W. (1963).
\newblock Probability inequalities for sums of bounded random variables.
\newblock {\em Journal of the American Statistical Association},
  58(301):13--30.

\bibitem[Ibragimov and Rozanov, 2012]{ibragimov2012gaussian}
Ibragimov, I.~A. and Rozanov, Y.~A. (2012).
\newblock {\em Gaussian random processes}.
\newblock Springer.

\bibitem[Kolmogorov and Rozanov, 1960]{kolmogorov1960strong}
Kolmogorov, A.~N. and Rozanov, Y.~A. (1960).
\newblock On strong mixing conditions for stationary {G}aussian processes.
\newblock {\em Theory of Probability and Its Applications}, 5(2):204--208.

\bibitem[Korolyuk and Borovskich, 1994]{korolyuk1994theory}
Korolyuk, V.~S. and Borovskich, Y.~V. (1994).
\newblock {\em Theory of U-statistics}.
\newblock Springe.

\bibitem[Lee, 1990]{lee1990u}
Lee, A.~J. (1990).
\newblock {\em U-statistics: Theory and Practice}.
\newblock CRC Press.

\bibitem[Liebscher, 2005]{liebscher2005towards}
Liebscher, E. (2005).
\newblock Towards a unified approach for proving geometric ergodicity and
  mixing properties of nonlinear autoregressive processes.
\newblock {\em Journal of Time Series Analysis}, 26(5):669--689.

\bibitem[Liu et~al., 2013]{liu2013probability}
Liu, W., Xiao, H., and Wu, W.~B. (2013).
\newblock Probability and moment inequalities under dependence.
\newblock {\em Statist. Sinica}, 23(3):1257--1272.

\bibitem[Mackey et~al., 2014]{mackey2014matrix}
Mackey, L., Jordan, M.~I., Chen, R.~Y., Farrell, B., and Tropp, J.~A. (2014).
\newblock Matrix concentration inequalities via the method of exchangeable
  pairs.
\newblock {\em The Annals of Probability}, 42(3):906--945.

\bibitem[Merlev{\`e}de et~al., 2009]{merlevede2009bernstein}
Merlev{\`e}de, F., Peligrad, M., and Rio, E. (2009).
\newblock Bernstein inequality and moderate deviations under strong mixing
  conditions.
\newblock In {\em High Dimensional Probability V: the Luminy Volume}, pages
  273--292. Institute of Mathematical Statistics.

\bibitem[Oliveira, 2009]{oliveira2009concentration}
Oliveira, R.~I. (2009).
\newblock Concentration of the adjacency matrix and of the {L}aplacian in
  random graphs with independent edges.
\newblock {\em arXiv:0911.0600}.

\bibitem[Rio, 2017]{rio2017asymptotic}
Rio, E. (2017).
\newblock {\em Asymptotic Theory of Weakly Dependent Random Processes}.
\newblock Springer.

\bibitem[Shen et~al., 2019]{shen2019tail}
Shen, Y., Han, F., and Witten, D. (2019).
\newblock Tail behavior of dependent {V}-statistics and its applications.
\newblock {\em arXiv preprint arXiv:1902.02761}.

\bibitem[Tropp, 2012]{tropp2012user}
Tropp, J.~A. (2012).
\newblock User-friendly tail bounds for sums of random matrices.
\newblock {\em Foundations of Computational Mathematics}, 12(4):389--434.

\bibitem[Wu, 2005]{wu2005nonlinear}
Wu, W.~B. (2005).
\newblock Nonlinear system theory: Another look at dependence.
\newblock {\em Proceedings of the National Academy of Sciences},
  102(40):14150--14154.

\bibitem[Wu, 2011]{wu2011asymptotic}
Wu, W.~B. (2011).
\newblock Asymptotic theory for stationary processes.
\newblock {\em Statistics and its Interface}, 4(2):207--226.

\bibitem[Wu and Wu, 2016]{wu2016performance}
Wu, W.-B. and Wu, Y.~N. (2016).
\newblock Performance bounds for parameter estimates of high-dimensional linear
  models with correlated errors.
\newblock {\em Electronic Journal of Statistics}, 10(1):352--379.

\bibitem[Zhang and Wu, 2017]{zhang2017gaussian}
Zhang, D. and Wu, W.~B. (2017).
\newblock Gaussian approximation for high dimensional time series.
\newblock {\em The Annals of Statistics}, 45(5):1895--1919.

\end{thebibliography}


\end{document}